\newtheorem{theo}{Theorem}[section]
\newtheorem{coro}[theo]{Corollary}
\newtheorem{prop}[theo]{Proposition}
\newtheorem{lemm}[theo]{Lemma}
\theoremstyle{definition}
\newtheorem{rema}[theo]{Remark}
\newtheorem{defi}[theo]{Definition}
 \def\RR{{\mathbb R}}  \def\TT{{\mathbb T}}
 \def\ZZ{{\mathbb Z}}
\def\De{\Delta}
\def\Ga{\Gamma}
  \def\cG{{\cal G}}  
    \def\cU{{\cal U}}
    \def\cW{{\cal W}}
\def\cF{{\cal F}}
\title{Anomalous partially hyperbolic diffeomorphisms I: dynamically coherent examples}
\author{Christian Bonatti, Kamlesh Parwani\thanks{This work was partially supported by a grant from the Simons Foundation (\#280151 to Kamlesh Parwani).}, and  Rafael Potrie\thanks{R.~Potrie was partially supported by FCE-2011-6749, CSIC grupo 618 and the Palis-Balzan project.}}
\begin{document}

\maketitle

\begin{abstract}
We build an example of a non-transitive, dynamically coherent
partially hyperbolic diffeomorphism $f$ on a closed $3$-manifold
with exponential growth in its fundamental group such that $f^n$
is not isotopic to the identity for all $n\neq 0$.  This example
contradicts a conjecture in \cite{HHU}. The main idea is to
consider a well-understood time-$t$ map of a non-transitive Anosov
flow and then carefully compose with a Dehn twist.

\medskip

{\bf Diff\'eomorphismes partiellement hyperboliques anormaux I:
exemples dynamiquement coh\'erents}

\smallskip

R\'esum\'e: Sur une $3$-vari\'et\'e ferm\'ee dont le groupe
fondamental est \`a croissance exponentielle,  nous construisons
un exemple de diff\'eomorphisme $f$, partiellement hyperbolique,
dynamiquement coh\'erent, non-transitif, et dont aucune puissance
$f^n$, $n\neq 0$, n'est isotope \`a l'identit\'e.  Cet exemple
infirme une conjecture de \cite{HHU}. L'exemple est obtenu en
composant avec soin le temps $t$ d'un flot d'Anosov non-transitif
bien choisi avec un twist de Dehn.

{ \medskip \noindent \textbf{Keywords:} Partially hyperbolic
diffeomorphisms, classification.

\noindent \textbf{2010 Mathematics Subject Classification:}
Primary:  37D30  }

\end{abstract}

\section{Introduction}
In recent years, partially hyperbolic diffeomorphisms have been
the focus of considerable study. Informally, partially hyperbolic
diffeomorphisms are generalization of hyperbolic maps. The
simplest partially hyperbolic diffeomorphisms admit an invariant
splitting into three bundles: one of which is uniformly contracted
by the derivative, another which is uniformly expanded, and a
center direction whose behavior is intermediate. A more formal
definition will soon follow.  In this paper, we shall restrict to
the case of these diffeomorphisms on closed 3-dimensional
manifolds.


The study of partially hyperbolic diffeomorphisms has followed two
main directions. One consist of studying conditions under which a
volume preserving partially hyperbolic diffeomorphism is stably
ergodic. This is not the focus of this article. See \cite{HHU3, W}
for recent surveys on this subject.

The second direction, initiated in \cite{BW,BBI}, has as a long
term goal of classifying these partially hyperbolic systems, at
least topologically. Even in dimension 3, this goal seems quite
ambitious but some  partial progress has been made, which we
briefly review below. This paper is intended to further this
classification effort by providing new examples of partially
hyperbolic diffeomorphisms. In a forthcoming paper (\cite{BP}) we
shall provide new transitive examples (and even stably ergodic);
their construction uses some of the ideas of this paper as well as
some new ones. Another viewpoint might be that these new examples
throw a monkey wrench into the classification program. In light of
these new partially hyperbolic diffeomorphisms, is there any hope
to achieve any reasonable sense of a classification?

\subsection{Preliminaries}
Before diving into a detailed exposition of these examples, we
provide the necessary definitions and background. Let $M$ be a
closed $3$-manifold, we say that a diffeomorphism $f: M \to M$ is
\emph{partially hyperbolic} if the tangent bundle splits into
three one-dimensional\footnote{One of the advantages of working
with one-dimensional bundles is that the norm of $Df$ along such
bundles controls the contraction/expantion of every vector in the
bundle. Compare with definitions of partial hyperbolicity when the
bundles are not one-dimensional \cite{W}.} $Df$-invariant
continuous bundles $TM= E^{ss}\oplus E^c \oplus E^{uu}$ such that
there exists $\ell
>0$ such that for every $x \in M$:

$$ \|Df^\ell|_{E^{ss}(x)} \| < \min \{1, \|Df^\ell|_{E^c(x)}\|\} \leq
\max \{1, \|Df^\ell|_{E^c(x)} \} < \|Df^\ell|_{E^{uu}(x)} \|. $$

Sometimes, the more restrictive notion of \emph{absolute partial
hyperbolicity} is used. This means that $f$ is partially
hyperbolic and there exists $\lambda < 1 < \mu$ such that:

$$ \|Df^\ell|_{E^{ss}(x)} \| < \lambda<  \|Df^\ell|_{E^c(x)}\| < \mu < \|Df^\ell|_{E^{uu}(x)}
\|. $$

For the classification of such systems, one of the main obstacles
is understanding the existence of invariant foliations tangent to
the center direction $E^c$. In general, the bundles appearing in
the invariant splitting are not regular enough to guaranty unique
integrability. In the case of the strong stable $E^{ss}$ and
strong unstable $E^{uu}$ bundles, dynamical arguments insure the
existence of unique foliations tangent to the strong stable and
unstable bundle (see for example \cite{HPS}). However, the other
distributions need not be integrable.

The diffeomorphism $f$ is \emph{dynamically coherent} if there are
$2$-dimensional $f$-invariant foliations $\cW^{cs}$ and $\cW^{cu}$
tangent to the distributions $E^{ss}\oplus E^c$ and $E^c\oplus
E^{uu}$, respectively. These foliations, when they exist,
intersect along a $1$-dimensional foliation $\cW^{c}$ tangent to
$E^c$. The diffeomorphism $f$ is \textit{robustly dynamically
coherent} if there exists a $C^1$-neighborhood of $f$ comprised
only of dynamically coherent partially hyperbolic diffeomorphisms.
There is an example of non dynamically coherent partially
hyperbolic diffeomorphisms\footnote{We should remark that this
example is not absolutely partially hyperbolic. Moreover, it is
isotopic to one of the known models of partially hyperbolic
diffeomorphisms.} on $\TT^3$ (see \cite{HHU1}). This example is
not transitive and it is not known whether every transitive
partially hyperbolic diffeomorphisms on a compact $3$ manifold is
dynamically coherent. See \cite{HP} and references therein for the
known results on dynamical coherence of partially hyperbolic
diffeomorphisms in dimension 3.

Two dynamically coherent partially hyperbolic diffeomorphisms
$f\colon M\to M$ and $g\colon N\to N$ are \emph{leaf conjugate} if
there is a homeomorphism $h\colon M\to N$ so that $h$ maps the
center foliation of $f$ on the center foliation of $g$ and  for
any $x\in M$  the points $h(f(x))$ and $g(h(x))$ belong to the
same center leaf of $g$.

Up to now the only known example of dynamically coherent partially
hyperbolic diffeomorphisms were, up to finite lift and finite
iterates, leaf conjugate to one of the following models:

\begin{enumerate}
 \item linear Anosov automorphism of $\TT^3$;
 \item skew products over a linear Anosov map of the torus $\TT^2$;
 \item time one map of an Anosov flow.
\end{enumerate}

It has been conjectured, first in the transitive case (informally
by Pujals in a talk and then written in \cite{BW}), and later in
the dynamically coherent case (in many talks and minicourses
\cite{HHU}) that every partially hyperbolic diffeomorphism should
be, up to finite cover and iterate, leaf conjugate to one of these
three models. Positive results have been obtained in
\cite{BW,BBI,HP} and some families of 3-manifolds are now known
only to admit partially hyperbolic diffeomorphisms that are leaf
conjugate to the models.

\subsection{Statements of results}
The aim of this paper is to provide a counter example to the
conjecture stated above. Our examples are not isotopic to any of
the models.

In order to present the ideas in the simplest way, we have chosen
to detail the construction of a specific example on a (possibly
the simplest) manifold admitting a non-transitive Anosov flow
transverse to a non-homologically trivial incompressible
two-torus; the interested reader should consult \cite{Br} for more
on 3-manifolds admitting such non-transitive Anosov flows. Our
arguments go through directly in some other manifolds, but for
treating the general case of 3-manifolds admitting non-transitive
Anosov flows further work must be done.

\begin{theo}\label{t.main} There is a  closed orientable $3$-manifold $M$ endowed with a non-transitive Anosov flow $X$
and a diffeomorphism $f\colon M\to M$ such that:
\begin{itemize}
\item $f$ is absolutely partially hyperbolic; \item $f$ is
robustly dynamically coherent;
 \item the restriction of $f$ to its chain recurrent set coincides with the time-one map of the Anosov flow $X$, and
 \item for any $n\neq 0$, $f^n$ is not isotopic to the identity.
\end{itemize}
\end{theo}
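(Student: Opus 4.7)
The plan is to define $f = \tau \circ X_1$, where $X_1$ is the time-one map of a non-transitive Anosov flow $X$ on a carefully chosen manifold $M$, and $\tau$ is a Dehn twist supported in a small neighborhood of a transverse incompressible torus $T \subset M$. Take $M$ to admit a non-transitive Anosov flow whose chain recurrent set is a disjoint union of basic sets (say one attractor and one repeller) separated by the transverse incompressible torus $T$, which carries a non-trivial homology class. In a tubular neighborhood $U \cong T \times (-\epsilon, \epsilon)$ of $T$, chosen small enough to lie in the wandering set of $X$, define $\tau$ to be the identity outside $U$ and inside by the shear $(x, t) \mapsto (x + \phi(t)\gamma, t)$, where $\phi$ is a smooth bump rising from $0$ to $1$ and $\gamma$ is a non-trivial simple closed curve on $T$. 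Critically, I would choose $\gamma$ tangent to the stable direction of the Poincar\'e return map to $T$; at points of $T$ this direction coincides with the strong-stable bundle $E^{ss}$ of $X$, so $D\tau$ preserves $E^{ss}|_T$ and $E^{uu}|_T$ and shears the flow direction $E^c$ into $E^c \oplus E^{ss}$.

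The four properties are then verified in turn. \textbf{Chain recurrence:} Since $\tau = \mathrm{id}$ outside $U$ and $U$ is wandering for $X_1$, every orbit of $f$ eventually leaves $U$ and coincides with an $X_1$-orbit thereafter; the chain recurrent set is unchanged, and $f$ agrees with $X_1$ on it. \textbf{Partial hyperbolicity and dynamical coherence:} With $\gamma$ aligned to $E^{ss}|_T$, the derivative $D\tau$ preserves the strong-stable and strong-unstable cone fields, while the $E^{ss}$-component it introduces into $E^c$ is absorbed since $DX_1$ contracts $E^{ss}$ uniformly more strongly than $E^c$. A cone-field argument then yields an invariant splitting $\tilde E^{ss} \oplus \tilde E^c \oplus \tilde E^{uu}$ for $f$, with the uniform rates of $X$ preserved. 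For dynamical coherence, since $\tau$ preserves the foliation by level tori $T \times \{t\}$ and the weak stable and unstable foliations of $X$ are transverse to $T$, one glues the foliations of $X$ outside $U$ with their $\tau$-modifications inside $U$ to obtain $f$-invariant $2$-foliations $\cW^{cs}$ and $\cW^{cu}$ tangent to $\tilde E^{ss} \oplus \tilde E^c$ and $\tilde E^c \oplus \tilde E^{uu}$. Robustness follows from the openness of the cone-field conditions and the structural stability of the Anosov flow. \textbf{Non-isotopy:} The flow $X_t$ gives an isotopy from $\mathrm{id}$ to $X_1$, so $f = \tau \circ X_1 \simeq \tau$; inductively isotoping each factor of $X_1$ in $f^n$ to the identity shows $f^n \simeq \tau^n$. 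The Dehn twist along an incompressible torus, with $[\gamma]$ non-trivial in $\pi_1(M)$, acts on $\pi_1(M)$ by inserting powers of $[\gamma]$ at transverse intersections with $T$; since $M$ is aspherical (as it admits an Anosov flow) and $[\gamma]$ has infinite order, $\tau^n$ is not homotopic, hence not isotopic, to the identity for any $n \neq 0$.

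The main obstacle will be establishing partial hyperbolicity and dynamical coherence on the support of $\tau$, where $D\tau$ is a large shear rather than a small perturbation: making $U$ narrow to keep it away from the chain recurrent set forces $\phi'$ to be large, which threatens to destabilize the cone fields. This is precisely why aligning $\gamma$ with the strong-stable direction is essential, so that the shear acts \emph{along} the splitting rather than across it. Making this rigorous requires careful cone-field estimates for $Df = D\tau \circ DX_1$ and a genuine (not merely distributional) construction of the $\cW^{cs}$ and $\cW^{cu}$ foliations, taking advantage of the product structure $T \times (-\epsilon, \epsilon)$ preserved by $\tau$.
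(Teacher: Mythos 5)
Your overall strategy (time map of a non-transitive Anosov flow composed with a Dehn twist along a transverse torus, chain recurrence unchanged because the twist is supported in the wandering set, non-isotopy detected topologically) is the same as the paper's, but the central step --- preserving partial hyperbolicity --- has a genuine gap, and your proposed fix does not work. First, the object you want to align the twist with is ill-defined: there is no Poincar\'e return map to $T$ (every orbit crosses the separating torus exactly once), and the trace of the weak stable foliation on $T$ is not a linear foliation --- in the Franks--Williams coordinates it has exactly two compact leaves with the remaining leaves spiralling between them --- so no constant translation direction $\gamma$ on $T\simeq \TT^2$ is tangent to it; moreover $E^{ss}$ is not tangent to $T$ at all (it is transverse to the flow, but has a nonzero component along $X$ relative to $TT$). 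Second, even granting a favorable choice of $\gamma$, the thin-support shear $(x,t)\mapsto(x+\phi(t)\gamma,t)$ has $\phi'\sim 1/\epsilon$, and $D\tau$ sends $\partial_t\mapsto\partial_t+\phi'(t)\gamma$; since $E^{ss}$ and $E^{uu}$ have nonzero $\partial_t$-components, $D\tau$ rotates the strong bundles by a large amount, so the claim that ``$D\tau$ preserves the strong-stable and strong-unstable cone fields'' fails. You correctly identify this as the main obstacle, but the alignment trick cannot overcome it.

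What the paper does instead is to spread the twist over an entire fundamental domain $\cU_N=\bigcup_{0\le t\le N}X_t(T_1)\simeq[0,N]\times\TT^2$ of the time-$N$ map, for $N$ large, taking $f=G\circ X_N$. In flow-adapted coordinates the shear then only moves $\partial_t$ by $O(1/N)$, and --- crucially --- the twist direction is chosen to be the one direction ($y$, the direction of the compact leaves) under whose translations \emph{both} induced foliations $F^s_1$ and $F^u_1$ on $T_1$ are invariant. Partial hyperbolicity is then not obtained by a cone argument at all, but by a gluing criterion: $E^{ss},E^{cs}$ extend canonically from the attractor, $E^{uu},E^{cu}$ from the repeller, and one only needs these extensions to be transverse on the fundamental domain; transversality holds because on $\cU_N$ one has $E^{cs}_A=E^{cs}_X$ while $E^{uu}_R=G_*(E^{uu}_X)$, and the rescaled strong foliations converge as $N\to\infty$ to vertical foliations preserved by the model shear. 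Your non-isotopy argument is also too quick: a Dehn twist along an incompressible torus need not be non-isotopic to the identity in general, and the action on $\pi_1$ is only an outer automorphism; the paper instead computes the action on $H_1$ explicitly, $(f^k)_*[\gamma_2]=[\gamma_2]+k[\gamma_1]$, which requires both a homologically non-trivial twisting class $[\gamma_1]$ and a dual curve $\gamma_2$ meeting $T_1$ algebraically once --- this is precisely why the construction starts from a $DA$ map with two sources.
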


The manifold $M$ on which our example is constructed also admits a
transitive Anosov flow (see \cite{BBY}).

As a corollary of our main theorem, we show that $f$ is a counter
example to the conjecture stated above in the non-transitive case
(see \cite{HHU,HP}):

\begin{coro}\label{c.conjecture}
Let $f$ be the diffeomorphism announced in Theorem~\ref{t.main}.
Then for all $n$ the diffeomorphism $f^n$ does not admit a finite
lift that is leaf conjugate to any of the following:
\begin{itemize}
\item linear Anosov diffeomorphisms on $\TT^3$;
 \item partially hyperbolic skew product with circle fiber over an Anosov
diffeomorphism on the torus $\TT^2$; \item the time-one map of an
Anosov flow.
\end{itemize}
\end{coro}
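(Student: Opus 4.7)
\emph{Proof proposal.} The plan is to rule out each of the three models separately, using different features. For the linear Anosov on $\TT^3$ and for the skew products over linear Anosov on $\TT^2$, I would argue purely topologically: $\TT^3$ has fundamental group $\ZZ^3$, and the total space of a circle-bundle skew product over $\TT^2$ is a nilmanifold whose fundamental group is virtually nilpotent. Both have polynomial growth. Since the paper establishes that $\pi_1(M)$ has exponential growth, every finite-index subgroup $\pi_1(\tilde M)$ also has exponential growth, so no finite cover $\tilde M$ of $M$ can be homeomorphic to the underlying manifold of model (1) or (2). Leaf conjugacy demands such a homeomorphism, so these cases are ruled out at once.

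For model (3), topology alone does not suffice, because $\tilde M$ may well support Anosov flows. My first step is to show that a leaf conjugacy $h\colon \tilde M\to N$ between $\tilde f^n$ and the time-one map $Y^1$ of an Anosov flow $Y$ on $N$ forces $\tilde f^n$ to be isotopic to $\mathrm{id}_{\tilde M}$. Indeed, the leaf conjugacy yields a continuous function $\tau\colon\tilde M\to\RR$ with $h\circ\tilde f^n = Y^{\tau}\circ h$, and then the family $G_s(x):=h^{-1}\bigl(Y^{s\tau(x)}(h(x))\bigr)$ gives an explicit isotopy from the identity to $\tilde f^n$. This step is straightforward.

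The main obstacle is to deduce from this that $f^{nk}$ is isotopic to the identity of $M$ for some $k\neq 0$, which would contradict Theorem~\ref{t.main}. An isotopy of $\tilde f^n$ to $\mathrm{id}_{\tilde M}$ need not be equivariant with respect to the deck transformations, so it does not project to $M$ directly. I would argue instead on $\pi_1$: since $M$ is aspherical (it carries a non-transitive Anosov flow) and Haken, isotopy classes of diffeomorphisms of $M$ correspond to outer automorphisms of $\pi_1(M)$ by Waldhausen-type rigidity. The upstairs conclusion gives that $(f^n)_*$ restricted to the finite-index subgroup $\pi_1(\tilde M)$ is inner in $\pi_1(\tilde M)$, and for a residually finite 3-manifold group one then hopes to show that some power $(f^{nk})_*$ is inner in all of $\pi_1(M)$. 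This group-theoretic descent is the delicate heart of the proof; I expect it may require either strengthening Theorem~\ref{t.main} to assert that $f^n$ has non-trivial isotopy class on every finite cover, or exploiting the explicit Dehn twist construction to verify directly that the mapping class of $f$ has infinite order in the mapping class group of any finite cover of $M$.
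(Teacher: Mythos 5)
Your treatment of models (1) and (2) via growth of $\pi_1$ is exactly the paper's argument: $\TT^3$ and circle bundles over $\TT^2$ have polynomial-growth fundamental groups, $\pi_1(M)$ has exponential growth since $M$ carries an Anosov flow, and the growth type is unchanged under finite covers, so no finite cover of $M$ can be the underlying manifold of those models. For model (3), however, you have taken a different path and hit a genuine gap. You reduce to showing that $\tilde f^{\,n}$ is isotopic to the identity on a finite cover $\tilde M$, then need to descend this to an isotopy-triviality of some $f^{nk}$ on $M$ in order to contradict the last item of Theorem~\ref{t.main}. You correctly observe that an isotopy upstairs need not be equivariant, and then leave the descent as a hope (``one then hopes to show''), gesturing at Waldhausen rigidity, residual finiteness, or a strengthened version of Theorem~\ref{t.main}. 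As written, this step does not go through, so your argument for model (3) is incomplete.

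The paper sidesteps the descent entirely by using Theorem~\ref{p.noleafconj}: there is a center leaf of $f$ that is not fixed by any nonzero iterate. If $\tilde f^{\,n}$ on a finite cover $\tilde M$ were leaf conjugate to the time-one map of an Anosov flow, then $\tilde f^{\,n}$ would fix every center leaf (the time-one map fixes each flow line, and a leaf conjugacy carries fixed leaves to fixed leaves). Since the covering map is a local diffeomorphism that carries the center foliation of $\tilde M$ to that of $M$ and intertwines $\tilde f^{\,n}$ with $f^n$, every center leaf of $M$ would then be fixed by $f^n$, contradicting Theorem~\ref{p.noleafconj}. The key point is that the obstruction used by the paper, namely the existence of a center leaf that no iterate fixes, is a dynamical invariant that passes cleanly between a manifold and its finite covers, so no mapping-class-group or $\pi_1$-outer-automorphism bookkeeping is required. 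Your first step for model (3) — extracting an isotopy from the leaf conjugacy — is plausible but also unnecessary; the subsequent group-theoretic descent is precisely what the center-leaf argument was designed to avoid.
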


\subsection{Organization of the paper}
The paper is organized in the following manner. In Section
\ref{s.DA} we describe a modified \textit{DA} diffeomorphism of
$\TT^2$. This particular \textit{DA} diffeomorphism of the torus
may not seem the simplest but it has the necessary properties that
make our example easy to present using only elementary methods. In
section \ref{s.FW}, we detail the construction of a non-transitive
Anosov flow, following the construction of Franks and Williams in
\cite{FW}. In Section \ref{s.perturbation} we establish
coordinates in a model space in order to prepare for the
appropriate perturbation diffeomorphism---a Dehn twist along a
separating torus $T_1$. Then, in Section \ref{s.N}, we choose the
length of the neighborhood of the separating torus $T_1$. We
present the example in Section \ref{s.example} after providing
criteria for establishing partial hyperbolicity in Section
\ref{s.Criteria}. Then, in Sections \ref{s.coherence} and
\ref{s.isotopy} we show that the example is dynamically coherent
and not leaf conjugate to previously known examples; it is also
not isotopic to the identity.  Next, in Section 2, we informally
outline the construction of our specific example.

\section{Informal presentation of the example}

The example is constructed in the following manner. We begin with
a $DA$ map with two sources instead of one.  This choice makes it
possible to easily show that our partially hyperbolic
diffeomorphism has no non-trivial iterate isotopic to the
identity.  Next, we build a non-transitive Anosov flow (apr\`es
\cite{FW}) transverse to a torus $T_1$ (this is always the case
for non-transitive Anosov flows \cite{Br}). Then our example is
obtained by composing the time $N$-map of this Anosov flow with a
Dehn twist along a neighborhood of the torus $T_1$ of the form

$$\bigcup_{t \in [0,N]} X_t (T_1),$$

\noindent which is diffeomorphic to $[0,1] \times \TT^2$.

The neighborhood and the time $N$ is chosen in order to preserve
partial hyperbolicity. In a nutshell, the idea is that a small
$C^1$-perturbation always preserves partial hyperbolicity, and so,
if we make the perturbation in a long enough neighborhood of
$T_1$, by insuring that the time interval  $[0,N]$ is sufficiently
large, the effect of the Dehn twist can be made to appear
negligible at the level of the derivative, even though the
$C^0$-distance cannot be made arbitrarily small\footnote{Note that
these statements are not precise and the remarks in this section
are intended to impart our intuition to the reader.}. More
precisely, we obtain conditions under which transversality between
certain bundles are preserved under this kind of composition which
allows to show partial hyperbolicity.

Since the perturbation is made in the wandering region of the
time-$N$ of the Anosov flow, the properties of the chain-recurrent
set are preserved and it is possible to study the integrability of
the center bundle by simply defining it in the obvious way and
showing that it plays well with attracting and repelling regions
as it approaches them. The fact that center leaves cannot be fixed
when they pass through the fundamental region where the
perturbation is made becomes a matter of checking that the new
intersections cannot be preserved by the altered dynamics.

There are some reasons for which we present a specific example:

\begin{enumerate}

\item Even though most of our arguments are quite general and our
Dehn twist perturbation can be applied to infinite family of
Franks-Williams type Anosov flows, it is easier to first see these
ideas presented for a single example. The construction of the
perturbation and the fact that it preserves partial hyperbolicity
is much easier to check in a specific case and we believe that
this narrative makes the global argument more transparent.

\item It is possible to apply these techniques to other types of
examples at the expense of having to check a few minor details.
However, to perform the examples in any manifold admitting a
non-transitive Anosov flow, some more work is required to
guarantee the transversality of the foliations after perturbation.
We believe this is beyond the scope of this paper and relegated to a more detailed study in a
forthcoming article. 

\item Mainly, for the specific example it is quite easy to give a
direct and intuitive argument to show that the resulting dynamics
has no iterate isotopic to the identity. This is carried out in
Section~\ref{s.isotopy} where we use the fact that the torus $T_1$
is homologically non-trivial and just utilize elementary algebraic
topology to show that the action on homology is non-trivial. For
the general case, showing that the perturbation is not isotopic to
the identity requires more involved arguments. We remark that the
paper \cite{McC} solves this problem in many situations, but in a
less elementary manner. Again, these details are best left to be expounded in another article.

\end{enumerate}

\section{Modified $DA$ map on $\TT^2$}\label{s.DA}

Our construction begins with a diffeomorphism of the torus. We use
a modified \textit{DA} map, with two sources instead of one.

We simply state the properties of the required map below.  The
classical construction of the Derived from Anosov (\textit{DA})
diffeomorphism is well known---see \cite{Ro} for instance. Our
modified \textit{DA} map is obtained by lifting (some iterate of)
the classical $DA$ map  to some $2$-folded cover of $\TT^2$.
Alternatively, one may begin with a linear Anosov diffeomorphism
with 2 fixed points and then create two sources by ``blowing up''
these fixed points and their unstable manifolds.

\begin{prop}\label{t.DA} There exists a diffeomorphism $\varphi\colon \TT^2\to \TT^2$ with the following properties:
\begin{itemize}
  \item the non-wandering set of $\varphi$ consists in one non-trivial
 hyperbolic attractor $A$ and two fixed sources $\sigma_1,\sigma_2$;
 \item the stable foliation of the hyperbolic attractor coincides with a linear
 (for the affine structure on $\TT^2=\RR^2/\ZZ^2$) irrational foliation on
 $\TT^2\setminus\{\sigma_1,\sigma_2\}$.
\end{itemize}
\end{prop}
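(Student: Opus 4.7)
I would follow the classical Derived-from-Anosov construction, but with the surgery performed simultaneously at two fixed points.

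First, I pick a hyperbolic matrix $\bar A \in SL(2,\ZZ)$ whose induced automorphism of $\TT^2$ has exactly two fixed points $\sigma_1, \sigma_2$; since the number of fixed points equals $|\det(\bar A - I)|$, any $\bar A$ with $|\det(\bar A - I)| = 2$ works. (Alternatively, as suggested in the statement, one could start from the classical one-source DA and pull it back through a suitable $2$-fold cover of $\TT^2$.) The eigenvalues $\mu > 1 > \mu^{-1}$ of $\bar A$ are irrational quadratic units, so the eigendirections $E^s, E^u$ have irrational slopes and determine linear foliations $\cF^s, \cF^u$ on $\TT^2$ with dense leaves.

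Second, I would perform the DA surgery at $\sigma_1$ and $\sigma_2$ in parallel. In disjoint neighborhoods $U_i$ of $\sigma_i$ I use linear coordinates $(x_s, x_u)$ adapted to $E^s, E^u$, in which $\bar A$ has the local form $(x_s, x_u) \mapsto (\mu^{-1} x_s, \mu x_u)$. I then compose $\bar A$ with a bump diffeomorphism $\psi_i$ supported in $U_i$ of the product form $\psi_i(x_s, x_u) = (g_i(x_s, x_u), x_u)$, chosen so that $\partial_{x_s} g_i(0,0)$ is large enough that the derivative of $\varphi := \psi_2 \circ \psi_1 \circ \bar A$ at $\sigma_i$ becomes expanding in both eigendirections. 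Because each $\psi_i$ fixes the $x_u$ coordinate, it preserves the horizontal lines $\{x_u = c\}$, i.e.\ the local leaves of $\cF^s$; consequently $\varphi$ preserves $\cF^s$ globally on $\TT^2$.

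Finally, I would describe the dynamics of $\varphi$. Each $\sigma_i$ is a source with an open basin of repulsion $V_i \subset U_i$. A standard cone-field argument, directly adapted from the one-source DA, shows that $\cA := \bigcap_{n \geq 0} \varphi^n(\TT^2 \setminus (V_1 \cup V_2))$ is a $\varphi$-invariant compact non-trivial hyperbolic attractor, and that the non-wandering set of $\varphi$ equals $\cA \cup \{\sigma_1, \sigma_2\}$. By construction, the stable manifolds of points of $\cA$ sit inside leaves of $\cF^s$, and since every such leaf is dense in $\TT^2$, the stable lamination of $\cA$ exhausts $\TT^2 \setminus \{\sigma_1, \sigma_2\}$ and coincides there with $\cF^s$. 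The main delicate point to monitor is the joint preservation of $\cF^s$ by the two perturbations; this becomes automatic once the charts on $U_i$ are chosen as restrictions of the linear chart given by $E^s, E^u$. The cone-field verification of hyperbolicity of $\cA$ goes through unchanged from the one-source setting, since the two surgery regions are disjoint and $\varphi$ agrees with $\bar A$ outside $U_1 \cup U_2$.
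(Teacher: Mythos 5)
Your proposal is correct and takes essentially the same route as the paper, which offers no detailed proof of Proposition~\ref{t.DA}: it simply cites the classical \emph{DA} construction in \cite{Ro} and indicates exactly the two options you mention, namely lifting the one-source \emph{DA} to a $2$-fold cover, or starting from a linear Anosov automorphism with two fixed points and blowing both up simultaneously. Your elaboration of the second option (surgery of the form $(x_s,x_u)\mapsto (g_i(x_s,x_u),x_u)$ in linear coordinates, so that the linear stable foliation is preserved and the standard cone-field argument applies verbatim) is exactly the intended argument.
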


\section{Building non-transitive Anosov flows}\label{s.FW}

In this section, we briskly run through the relevant details of
the classical Franks-Williams construction in (\cite{FW}) of a
non-transitive Anosov flow. Note that the map $\phi$ in \cite{FW}
is the standard $DA$ map with a single source, and so, the
presentation below has been adapted to our context.

Let $(M_0, Z)$ be the suspension of the $DA$-diffeomorphism
$\varphi$ given in Theorem~\ref{t.DA}. We denote by $\gamma_i$ the
periodic orbit of the flow of $Z$ corresponding to the sources
$\sigma_i$, and by $A_Z$ the hyperbolic attractor of the flow of
$Z$.

\begin{lemm}\label{l.coordinates} There is a convex map $\alpha\colon (0,\frac12)\to \RR$ tending to $+\infty$ at
 $0$ and $\frac 12$, whose derivative vanishes exactly at $\frac 14$, and  so that, for any
 $i\in1,2$ there is a tubular neighborhood $\Ga_i$ of $\gamma_i$ whose boundary is an embedded torus
$T_i\simeq \TT^2$ so that
\begin{itemize}
\item $T_i$ is transverse to $Z$, and therefore to the weak stable
foliation $W^{cs}_Z$ of the attractor $A_Z$;  We denote by $F^s_i$
the $1$-dimensional foliation induced by $W^{cs}_Z$ on $T_i$;
 \item there are coordinates $\theta_i\colon \TT^2\to T_i$ so that, in these coordinates:
 \begin{itemize}
 \item $F^s_i$ has exactly $2$ compact leaves $\{0\}\times S^1$ and $\{\frac 12\}\times S^1$;
 \item given  any leaf $L$ of $F^s_i$ in $(0,\frac 12)\times S^1$ there is $t\in\RR$
 so that  $L$ is the projection on $\TT^2$ of the graph of $\alpha(x)+t$;

 \item given  any leaf $L$ of $F^s_i$ in $(\frac 12,1)\times S^1$ there is $t\in\RR$
 so that  $L$ is the projection on $T^2$ of the graph of $\alpha(x-\frac 12)+t$.
 \end{itemize}
\end{itemize}
\end{lemm}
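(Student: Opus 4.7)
The plan is to reduce the lemma to an explicit computation in a small tubular neighborhood of each source orbit $\gamma_i$. Since $W^{cs}_Z$ is saturated by the flow, it suffices to describe its intersection with each fiber $\TT^2\times\{s\}$, which by Proposition~\ref{t.DA} is the linear irrational foliation in a fixed direction $\xi$ on $\TT^2\setminus\{\sigma_1,\sigma_2\}$, together with the monodromy given by one period of $\varphi$. The function $\alpha$ will fall out of the local geometry around $\sigma_i$.

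First I would pick smooth coordinates $(u,v)$ in $\TT^2$ centered at $\sigma_i$ with $\partial_u$ along $\xi$, so the stable leaves read $\{v=c\}$; after a standard linearization I can further assume $\varphi(u,v)=(\lambda_u u,\lambda_v v)$ with $\lambda_u,\lambda_v>1$. I would then build $\Gamma_i$ as the union over $s\in[0,1]$ (identified at $s=0,1$ via $\varphi$) of elliptic discs $D_s=\{u^2/a(s)^2+v^2/b(s)^2\le 1\}$, with smooth positive $a,b$ satisfying $a(0)=b(0)=r$ and $a(1)=r/\lambda_u$, $b(1)=r/\lambda_v$. This forces $\varphi(\partial D_1)=\partial D_0$, so the union closes up to a smoothly embedded solid torus whose boundary $T_i$ is a torus; a direct computation with $a,b$ chosen exponential in $s$ shows that $\partial_s$ is never tangent to $T_i$ for small $r$, so $T_i$ is transverse to $Z$.

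The next step is to read off $F^s_i$ in coordinates $(s,\theta)\in S^1\times S^1$ on $T_i$ given by $(u,v)=(a(s)\cos\theta,b(s)\sin\theta)$. The two half-leaves of $W^{cs}_Z$ obtained by suspending the rays $\{v=0,\,u>0\}$ and $\{v=0,\,u<0\}$ (the two pieces of the broken linear leaf through $\sigma_i$) are attached along $\gamma_i$ and cut $T_i$ exactly along $\{\theta=0\}$ and $\{\theta=\pi\}$; these are the two compact leaves. For $c\ne 0$ the leaf of $F^s_i$ coming from $\{v=c\}$ satisfies $b(s)\sin\theta=c$. Taking $b(s)=r\lambda_v^{-s}$ rewrites this as $s=\log_{\lambda_v}(r\sin\theta/c)$, and setting $x=\theta/(2\pi)$, $y=-s\pmod 1$, $t=\log_{\lambda_v}(c/r)$ gives the leaf in the form $y=\alpha(x)+t$ with $\alpha(x)=-\log_{\lambda_v}(\sin(2\pi x))$ on $x\in(0,1/2)$. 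Direct differentiation confirms that $\alpha$ is convex, tends to $+\infty$ at both endpoints, and has $\alpha'(1/4)=0$. The cylinder $(1/2,1)\times S^1$ corresponds to $v<0$ and, by the symmetry $v\mapsto -v$, gives leaves of the form $y=\alpha(x-1/2)+t$.

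The hard part will be producing the \emph{same} $\alpha$ for $i=1$ and $i=2$, since $\lambda_v$ at $\sigma_1$ need not agree with that at $\sigma_2$ and the raw computation yields different functions $\alpha_i$. I plan to handle this through the freedom in choosing the parameterizations $\theta_i$: both $F^s_1$ and $F^s_2$ have the same topological type (two compact leaves bounding two Reeb-like cylinders, each foliated by convex U-shaped graphs), so there is a diffeomorphism of $\TT^2$ carrying one foliation onto the other, and precomposing each $\theta_i$ by a suitable such diffeomorphism replaces $\alpha_1,\alpha_2$ by a common $\alpha$ with the required properties.
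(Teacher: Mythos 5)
The paper itself does not prove this lemma (it simply cites page~165 of [FW]), so what you have written is a reconstruction of the Franks--Williams computation, and the core of it is correct. Building $\Gamma_i$ as the union of exponentially shrinking ellipses $D_s$ in the fibers of the suspension with $\varphi(D_1)=D_0$, checking that $\partial_s$ is never tangent to the resulting torus (your equations $\alpha^2=-a'b'/(ab)<0$ off the axes, $a'\neq 0$, $b'\neq 0$ on them), and solving $b(s)\sin\theta=c$ with $b(s)=r\lambda_v^{-s}$ does yield leaves that are projections of graphs of $\alpha(x)+t$ with $\alpha(x)=-\log_{\lambda_v}\sin(2\pi x)$, which is convex, tends to $+\infty$ at $0$ and $\tfrac12$, and has vanishing derivative exactly at $\tfrac14$; the two compact leaves are the traces of the flow-saturations of the two half-leaves of the punctured linear leaf through $\sigma_i$, and the cylinder $(\tfrac12,1)\times S^1$ is handled by the same computation with $c<0$. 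This is essentially the content of the cited page of [FW].

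Your final paragraph, however, deserves comment on two counts. First, the difficulty it addresses does not arise: the DA surgery moves points only within the leaves of the invariant linear foliation, so $\varphi$ and the underlying linear Anosov map induce the same germ of map on the local leaf space at each source; hence the eigenvalue of $D\varphi(\sigma_i)$ transverse to $\xi$ is the unstable eigenvalue $\lambda$ of the linear model for both $i=1,2$ (and in the construction by lifting to a double cover the two sources are literally locally isomorphic). Thus $\lambda_{v,1}=\lambda_{v,2}$ and the same $\alpha$ comes out at both tori with no adjustment. Second, the repair you propose would not work if the eigenvalues did differ: sameness of topological type does not produce a diffeomorphism of $\TT^2$ carrying one foliation onto the other, because the holonomy of a compact leaf is $x\mapsto\lambda_{v,i}^{-1}x+o(x)$ and its derivative at the fixed point is a $C^1$-conjugacy invariant (equivalently, near $x=0$ one has $\alpha_i(x)\sim-\ln x/\ln\lambda_{v,i}$, and the required change of variables $\phi$ with $\alpha_1\circ\phi=\alpha_2+\mathrm{const}$ behaves like $x^{\ln\lambda_{v,1}/\ln\lambda_{v,2}}$, which is not a diffeomorphism germ unless the exponents agree). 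So you should delete that paragraph and replace it with the observation about the transverse eigenvalues; with that change the proof is complete.
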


\begin{proof} See page 165 of \cite{FW}.
\end{proof}

Notice that the expression of $F^s_1$ and $F^s_2$ are the same in
the chosen coordinates. We denote by $F^u_i$ the image of $F^s_i$
by the translation $(x,y)\mapsto (x+\frac 14,y)$.
\begin{coro} $F^s_i$ and $F^u_j$ are transverse.
Moreover $F^s_i$ and $F^u_i$ are invariant under any translation
in the second coordinates (that is $(x,y)\mapsto (x,y+t)$,
$t\in\RR$).
\end{coro}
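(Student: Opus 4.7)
The plan is to read off both conclusions directly from the explicit description of the leaves given in Lemma~\ref{l.coordinates}; only elementary computations in the coordinates $\theta_i$ are needed.

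For the invariance under vertical translations, I observe that every leaf of $F^s_i$ is, by construction, either one of the compact leaves $\{0\}\times S^1$, $\{\frac12\}\times S^1$ or the projection of a graph of the form $\alpha(x)+t$ on $(0,\frac12)\times S^1$ or $\alpha(x-\frac12)+t$ on $(\frac12,1)\times S^1$, for some $t\in\RR$. The translation $(x,y)\mapsto(x,y+s)$ fixes the compact leaves setwise and sends the graph with parameter $t$ to the graph with parameter $t+s$, so $F^s_i$ is preserved. Since $F^u_i$ is the image of $F^s_i$ under the horizontal translation $(x,y)\mapsto(x+\frac14,y)$, which commutes with every vertical translation, the same invariance holds for $F^u_i$.

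For transversality I first record the tangent line of each foliation at every point. A leaf of $F^s_i$ has tangent vector $(1,\alpha'(x))$ on $(0,\frac12)\times S^1$ and $(1,\alpha'(x-\frac12))$ on $(\frac12,1)\times S^1$, and is vertical at $x\in\{0,\frac12\}$. Shifting horizontally by $\frac14$, leaves of $F^u_j$ are vertical at $x\in\{\frac14,\frac34\}$, have slope $\alpha'(x-\frac14)$ on $(\frac14,\frac34)$, and slope $\alpha'(x+\frac14)$ on $(0,\frac14)$ and $\alpha'(x-\frac34)$ on $(\frac34,1)$ (the first of these requiring a single lift to the universal cover to handle the wrap-around at $x=0\equiv 1$). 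Because both $F^s$ and $F^u$ take the same normal form in the coordinates, the case $i\neq j$ reduces at once to comparing two such standard foliations on the same torus.

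The decisive input is that $\alpha$ is convex on $(0,\frac12)$ with derivative vanishing exactly at $\frac14$, so $\alpha'<0$ on $(0,\frac14)$ and $\alpha'>0$ on $(\frac14,\frac12)$. A short case check on each of the four open subarcs $(0,\frac14), (\frac14,\frac12), (\frac12,\frac34), (\frac34,1)$ shows that the two slopes have strictly opposite signs; for instance on $(0,\frac14)$ one has $\alpha'(x)<0<\alpha'(x+\frac14)$. At each of the four exceptional values $x\in\{0,\frac14,\frac12,\frac34\}$, one foliation is vertical while the other has slope $\alpha'(\frac14)=0$, i.e.\ horizontal. In every case the tangent lines are distinct, which is exactly the required transversality. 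No real obstacle appears; the only mildly delicate bookkeeping point is the wrap-around described above, and it is harmless.
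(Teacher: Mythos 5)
Your proof is correct and is exactly the verification the paper leaves implicit (the corollary is stated without proof, as an immediate consequence of the normal form in Lemma~\ref{l.coordinates}, the definition of $F^u_i$ as a horizontal translate, and the convexity of $\alpha$). The sign analysis of $\alpha'$ on the four subintervals and the vertical-versus-horizontal check at the exceptional circles are precisely what the figure in the paper is meant to convey.
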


\begin{figure}[ht]\begin{center}
\input{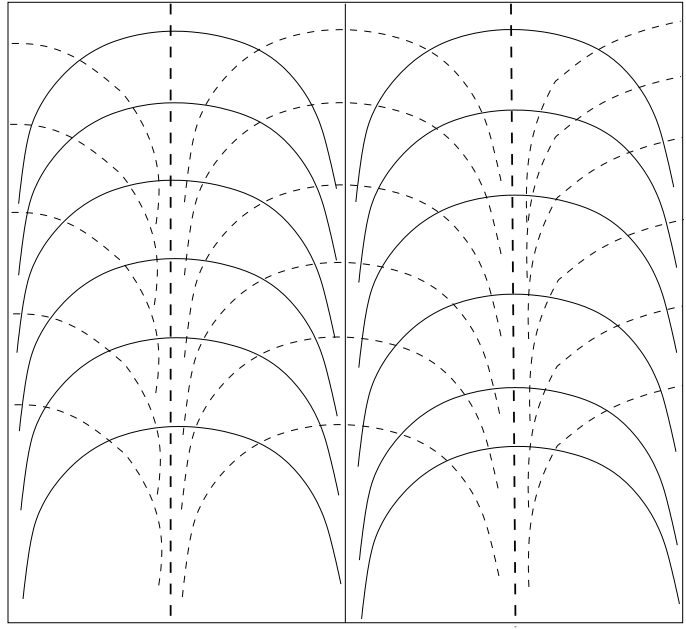}
\caption{\small{Gluing of the two foliations making them
transverse. Notice that both foliations are invariant under
vertical translations. }}\label{figure2}
\end{center}\end{figure}

We are now ready to define our manifold $M$ and the vector-field
$X$ on $M$.

Let $M^+$ be the manifold with boundary obtained by removing to
$M_0$ the interior of $\Ga_1\cup \Ga_2$, and we denote by $Z^+$ he
restriction of $Z$ to $M^+$.  We denote by $T_i^+$ the boundary
component of $M^+$ corresponding to the boundary of $\Ga_i$.
Notice that $T_i^+$ are tori transverse to $Z^+$ and $Z^+$ points
inwards to $M^+$.

Let $M^-$ be another copy of $M^+$ and we denote by $Z^-$ the
restriction of $-Z$ to $M^-$. We denote by $T_i^-$ the boundary
component of $M^-$, there are transverse to $Z^-$ and $Z^-$ is
points outwards of $M^-$.

We denote by $\psi\colon \partial M^+\to \partial M^-$ the
diffeomorphisms sending $T_i^+$ on $T_i^-$, $i=1,2$, and whose
expression in the $\TT^2$ coordinates are $(x,y)\mapsto
(x+\frac14,y)$.

We denote by $M$ the manifold obtained by gluing $M^+$ with $M^-$
along the diffeomorphism $\psi$. We can now restate
Franks-Williams theorem in our setting:

\begin{theo} There is a smooth structure on $M$, coinciding with the smooth structures on
$M^+$ and on $M^-$ and there is a smooth vector-field $X$ on $M$,
whose restrictions to $M^+$ and $M^-$ are  $Z^+$ and $Z^-$
respectively.

Furthermore $X$ is an Anosov vector-field whose non-wandering set
consists exactly in a non-trivial hyperbolic attractor $A_X$
contained in $M^+$ and a non-trivial hyperbolic repeller $R_X$
contained in $M^-$.
\end{theo}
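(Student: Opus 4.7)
The plan is to prove the theorem in three steps: first construct the smooth structure and the smooth vector-field, second identify the non-wandering set, and third establish the Anosov property, with the last being the main difficulty.

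For the first step, I would use flow-box coordinates near each gluing torus. Since $Z^+$ is transverse to $T_i^+$ pointing strictly inwards to $M^+$, and $Z^-$ is transverse to $T_i^-$ pointing strictly outwards of $M^-$, for $\epsilon>0$ small enough the maps $(p,t)\mapsto Z^+_t(p)$ and $(q,t)\mapsto Z^-_t(q)$ are smooth embeddings of $T_i^+\times[0,\epsilon)$ into $M^+$ and of $T_i^-\times(-\epsilon,0]$ into $M^-$. Using $\psi$ to identify $T_i^+\times\{0\}$ with $T_i^-\times\{0\}$ gives a smooth tubular neighborhood of each glued torus modeled on $\TT^2\times(-\epsilon,\epsilon)$, with the vector-field written as $\partial_t$ in these coordinates. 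This simultaneously fixes a smooth structure on $M$ that restricts to the given ones on $M^\pm$ and produces the smooth vector-field $X$ restricting to $Z^+$ and $Z^-$.

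For the second step, I would argue that the non-wandering set of $X$ is exactly $A_Z\sqcup R_X$, where $A_Z\subset M^+$ is the hyperbolic attractor of $Z$ and $R_X\subset M^-$ is the image of $A_Z$ in the time-reversed copy. Because $Z^+$ points inwards on all of $\partial M^+$, the interior of $M^+$ is a trapping region for the forward flow, so every forward orbit entering $M^+$ accumulates on $A_Z$. Symmetrically, $M^-$ is trapping for the backward flow and backward orbits accumulate on $R_X$. An orbit not contained in $M^+$ must at some positive time enter $M^-$ through some glued torus, and by the pointing conventions it can never return; hence every such orbit is wandering. The hyperbolicity of $A_Z$ (as a hyperbolic attractor of the suspension of $\varphi$) and of $R_X$ follows from Proposition~\ref{t.DA}, and their non-triviality follows from the fact that $A$ is non-trivial.

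The third step is the heart of the matter. I would construct invariant continuous cone fields $C^s$ and $C^u$ on $M$, transverse to $X$, that are uniformly contracted by $DX_{-t}$ and $DX_t$ respectively, and then invoke the usual cone criterion to obtain an Anosov splitting. On the interior of $M^+$, the stable cone of the suspension flow $Z$ (which is well-defined in a neighborhood of $A_Z$ and, using the stable foliation extending to all of $\TT^2\setminus\{\sigma_1,\sigma_2\}$ from Proposition~\ref{t.DA}, to all of $M^+$) supplies $C^s$; similarly $C^u$ on $M^-$ comes from $Z^-=-Z$. The problem is to extend $C^u$ into $M^+$ through the gluing and $C^s$ into $M^-$. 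Here the decisive input is the transversality of $F^s_i$ and $F^u_i$ given by the Corollary: on $T_i$, the direction of $F^s_i$ (which is the trace of the weak-stable bundle of $Z^+$) and the direction of $F^u_i$ (which, via $\psi$, is the trace of the weak-stable bundle of $Z^-$) span $TT_i$. Using the flow-box coordinates of step one, this means that in a tubular neighborhood of each glued torus one can build cones $C^s$ and $C^u$ based on these two transverse directions and the flow direction, in such a way that $C^s$ strictly contains the stable cone of $Z^+$ coming from $M^+$, and is mapped strictly inside itself by $DX_{-t}$ for $t>0$ thanks to the contracting dynamics of $\psi$ on $F^u_i$ (the convexity of $\alpha$ with $\alpha'=0$ only at $1/4$ yields the required domination between the two transverse directions). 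The symmetric statement holds for $C^u$.

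The main obstacle is the cone construction across the gluing tori, where the stable bundle of $X$ is genuinely discontinuous from one side to the other. What saves us is exactly the transversality corollary, which ensures that both candidate stable directions lie in the same wide open cone, so that a single continuous cone field can accommodate them; the fact that $\alpha$ has its unique critical point at $1/4$ — where $\psi$ is translating — is what produces the strict invariance needed for hyperbolicity. Once $C^s$ and $C^u$ are in place, the standard cone-field criterion (see e.g. \cite{HPS}) yields the invariant hyperbolic splitting and the Anosov property of $X$.
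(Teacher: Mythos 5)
The paper does not actually prove this theorem; it is stated as a restatement of the Franks--Williams construction and justified by citation to \cite{FW}. Your outline reconstructs exactly that classical argument, and its architecture is correct: flow-box gluing for the smooth structure and the vector field, the trapping-region argument for the non-wandering set (note one slip of the pen: an orbit starting in $M^-$ and not contained in $R_X$ eventually \emph{leaves} $M^-$ and enters $M^+$, never to return --- it does not ``enter $M^-$''), and an invariant cone/plane-field argument for hyperbolicity whose decisive input is the transversality of $F^s_i$ and $F^u_i$ on the gluing tori.

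The one place where your reasoning goes wrong is the attribution of the strict cone invariance. The gluing map $\psi$ is a rigid translation applied exactly once per crossing; it has no ``contracting dynamics,'' and the convexity of $\alpha$ with its unique critical point at $\tfrac14$ buys you only the transversality $F^s_i\pitchfork F^u_i$, not any domination. The actual mechanism is the one the paper later formalizes as Theorem~\ref{t.criterion}: once a plane field containing the flow direction is transverse to $W^{cs}_Z$ at every point of the compact fundamental domain between $T_i$ and its first return to a neighborhood of $A_Z$, the domination of the hyperbolic splitting over $A_Z$ forces its forward iterates to converge exponentially to $E^{cu}$, uniformly by compactness; this (together with the symmetric statement near $R_X$) is what yields strictly invariant, eventually contracted cones and hence the Anosov property. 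As written, your step three asserts strict invariance of $C^s$ under $DX_{-t}$ ``thanks to the contracting dynamics of $\psi$,'' which is not a valid justification; replacing it by the transversality-plus-domination argument above closes the gap and brings your proof in line with the argument of \cite{FW} that the paper invokes.
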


A non-singular vector field $X$ on a $3$-manifold is an
\emph{Anosov vector field} if the time-$t$ map $X_t$ is partially
hyperbolic for some $t >0$ (see \cite{Br,FW} for the classical
definition). It is easy to show that the time $t$-map of an Anosov
flow will always be absolutely partially hyperbolic.

\begin{rema}\label{r.coherenceFlows}
Since the bundle generated by $X$ must be necessarily the center
bundle of $X_t$ it follows that the center direction is always
integrable for Anosov flows. Indeed, it is a classical result (see
e.g. \cite{HPS}) that Anosov flows are dynamically
coherent\footnote{This is not the way it is usually stated.
Dynamical coherence is modern terminology.}. We call $W^{cs}_X$
and $W^{cu}_X$ the center stable and center unstable foliations
(sometimes called weak stable and weak unstable foliations in the
context of Anosov flows) and $W^{ss}_X$ and $W^{uu}_X$ the strong
stable and unstable foliations respectively. We denote as
$E^{cs}_X$, $E^{cu}_X$, $E^{ss}_X$ and $E^{uu}_X$ to the tangent
bundles of these foliations.
\end{rema}

The above construction immediately generalizes to examples arising
from $DA$ maps with $n$ sources instead of $2$, where $n \geq 1.$
We call these flows \textit{Franks-Williams type Anosov flows.}
The arguments in the next sections are very general and apply to
all Franks-Williams type Anosov flows.

\section{A perturbation on a model space}\label{s.perturbation}

In this section we shall perform a perturbation in a certain model
space.

Consider the torus $T_1$ and $X_1(T_1)$ where $X_1$ is the time
one map of the flow of $X$. Then $T_1$ and $X_1(T_1)$ bound a
manifold diffeomorphic to $[0,1]\times \TT^2$, which is a
fundamental domain of $X_1$. We shall use the convention that sets
of the form $[a,b] \times \{p\}$ are \emph{horizontal} and those
of the form $\{t\} \times \TT^2$ are \emph{vertical}.

The projection of the vector-field $X$ on this coordinates is
$\frac\partial{\partial t}$.

We denote by $\cF^{ss}$ and $\cF^{uu}$ the $1$-dimensional
foliations $(\{t\}\times F^s_1)_{t\in [0,1]}$ and $(\{t\}\times
F^u_1)_{t\in  [0,1]}$ and by $\cF^{cs}$ and $\cF^{cu}$ the
$2$-dimensional foliations $ [0,1]\times F^s_1$ and $ [0,1]\times
F^u_1$.

\begin{lemm}\label{l.transverse}Let $\cG\colon [0,1]\times \TT^2\to [0,1] \times \TT^2$
be the diffeomorphism defined as $(t,x,y)\mapsto (t,x,y+\rho(t))$.
Where $\rho: [0,1] \to [0,1]$ is a monotone smooth function such
that it is identically zero in a neighborhood of $0$ and
identically $1$ in a neighborhood of $1$.

Then $\cG(\cF^{cu})$ is transverse to $\cF^{ss}$ and
$\cG(\cF^{uu})$ is transverse to $\cF^{cs}$.

\end{lemm}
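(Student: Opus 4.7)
The plan is to reduce the transversality of the perturbed $2$-dimensional foliations against the unperturbed $1$-dimensional foliations to the transversality of $F^s_1$ and $F^u_1$ on $\TT^2$ already recorded in the corollary following Lemma~\ref{l.coordinates}. The key tool is the vertical-translation invariance of $F^s_1$ and $F^u_1$, which will effectively kill the $\rho(t)$-shift at the slice-wise level.

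First I would compute tangent spaces explicitly in the coordinates $(t,x,y)\in[0,1]\times\TT^2$. A leaf of $\cF^{cu}$ has the form $[0,1]\times L$ with $L$ a leaf of $F^u_1$, so its tangent plane at $(t,x,y)$ is spanned by $(1,0,0)$ and $(0,v_1,v_2)$ with $(v_1,v_2)$ tangent to $L$ at $(x,y)$. Applying $D\cG$, which sends $(1,0,0)\mapsto(1,0,\rho'(t))$ and fixes $(0,v_1,v_2)$, I get that $T\cG(\cF^{cu})$ at $\cG(t,x,y)=(t,x,y+\rho(t))$ is
\[
\mathrm{span}\bigl\{(1,0,\rho'(t)),\;(0,v_1,v_2)\bigr\}.
\]
By the vertical invariance of $F^u_1$, the vector $(v_1,v_2)$ is also tangent to $F^u_1$ at $(x,y+\rho(t))$. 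Meanwhile, $T\cF^{ss}$ at $(t,x,y+\rho(t))$ is spanned by $(0,u_1,u_2)$ with $(u_1,u_2)$ tangent to $F^s_1$ at $(x,y+\rho(t))$.

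Next I would test whether $(0,u_1,u_2)$ lies in that $2$-plane. Writing $(0,u_1,u_2)=a(1,0,\rho'(t))+b(0,v_1,v_2)$ forces $a=0$, so $(u_1,u_2)=b(v_1,v_2)$, i.e.\ the $F^s_1$- and $F^u_1$-directions at the common base point would be parallel. This contradicts the transversality of $F^s_1$ and $F^u_1$ on $\TT^2$ given in the corollary to Lemma~\ref{l.coordinates}, so $\cG(\cF^{cu})\pitchfork\cF^{ss}$. The proof of $\cG(\cF^{uu})\pitchfork\cF^{cs}$ is symmetric: the tangent of $\cG(\cF^{uu})$ at $\cG(t,x,y)$ reduces to $(0,v_1,v_2)$ with $(v_1,v_2)$ tangent to $F^u_1$ (here $D\cG$ acts trivially because $(1,0,0)$ does not appear), while $T\cF^{cs}$ is spanned by $(1,0,0)$ and $(0,u_1,u_2)$ with $(u_1,u_2)$ tangent to $F^s_1$; an identical coefficient-matching argument reduces the question to the same transversality of $F^s_1$ and $F^u_1$.

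There is no genuine obstacle here; the only item to be slightly careful about is bookkeeping which tangent vectors are based at which points, since $\cG$ shears the $y$-coordinate by $\rho(t)$ as one moves in the $t$-direction. The vertical-translation invariance of both $F^s_1$ and $F^u_1$ is precisely what makes this shear harmless, so that the whole assertion collapses to the pointwise transversality of $F^s_1$ and $F^u_1$ on $\TT^2$.
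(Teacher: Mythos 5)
Your proof is correct and is essentially the paper's argument: the paper disposes of this lemma in one line by citing the transversality and vertical-translation invariance of $F^s_1$ and $F^u_1$ together with the fact that $\cG$ shears only in the $y$-direction, and your explicit tangent-space computation is just the careful writing-out of exactly that reduction. The coefficient-matching step (forcing $a=0$ in the first coordinate and then invoking $F^s_1\pitchfork F^u_1$) is the right way to make the ``direct consequence'' precise.
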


\begin{proof} This is a direct consequence of Lemma
\ref{l.coordinates}, Corollary \ref{c.transverse} and  the fact
that $\cG$ makes translations only in the $y$-direction of the
coordinates given by that lemma.
\end{proof}

\section{The strong stable and unstable foliations on a fundamental domain of $X_N$ for large $N$}\label{s.N}

For any $N>0$ we consider the fundamental domain $\cU_N$ of the
diffeomorphism $X_N$ (time $N$ of the flow of $X$) restricted to
$M \setminus (A_X \cup R_X)$ bounded by $T_1$ and $X_N(T_1)$.

This fundamental domain $\cU_N$ is canonically  identified with
$[0,N] \times T_1$: the projection of $T_1$ is the identity map on
$\{0\}\times T_1$ and the projection of $X$ is
$\frac\partial{\partial t}$.

The intersection of the weak stable and weak unstable
$2$-foliations $W^{cs}_X$, $W^{cu}_X$  as well as the
($1$-dimensional) strong stable and strong unstable foliations
$W^{ss}_X$ and $W^{uu}_X$ with $\cU_N$; will be denoted by
$W^{cs}_N$, $W^{cu}_N$ $W^{ss}_N$, $W^{uu}_N$.

\begin{lemm}\label{l.expressionindep} The expression of the tangent space of  $W^{cs}_N$, $W^{cu}_N$
$W^{ss}_N$, $W^{uu}_N$ at a point $(t,x,y)\in [0,N] \times T_1$
only depends on $(x,y)\in T_1$ : it
 depends neither of $t\in [0,N]$ nor on $N$.
\end{lemm}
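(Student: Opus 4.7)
The plan is to exploit the fact that the coordinates $(t,x,y)$ on $\cU_N$ come from the parametrization $\Phi\colon [0,N]\times T_1 \to \cU_N$, $\Phi(t,x,y) = X_t(x,y)$, which by construction conjugates $\partial/\partial t$ with the Anosov vector field $X$. Since each of $W^{cs}_X, W^{cu}_X, W^{ss}_X, W^{uu}_X$ is invariant under the flow of $X$, its tangent bundle transforms under $DX_t$ in exactly the same way as the coordinate frame $(\partial/\partial t, \partial/\partial x, \partial/\partial y)$, so the coordinate expression of the tangent plane is forced to be $t$-independent.

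Concretely, I would first compute $d\Phi_{(t,x_0,y_0)}$: it sends $\partial/\partial t$ to $X(X_t(x_0,y_0))$ and sends any vector $v\in T_{(x_0,y_0)}T_1$ (identified via the obvious inclusion into $T_{(0,x_0,y_0)}([0,N]\times T_1)$) to $DX_t\cdot v$. Next, I would invoke flow-invariance: for $*\in\{ss,uu,cs,cu\}$ one has $E^*_X(X_t(x_0,y_0)) = DX_t\bigl(E^*_X(x_0,y_0)\bigr)$, where the center(-stable/-unstable) bundles automatically contain $X$ and $DX_t\cdot X = X$. Writing any vector of $E^*_X(x_0,y_0)$ as $a X(x_0,y_0) + v$ with $v\in T_{(x_0,y_0)}T_1$, its image $a X(X_t(x_0,y_0)) + DX_t v$ has, in the coordinate frame at $(t,x_0,y_0)$, the same coordinates $(a,v)$ as the original vector at $(0,x_0,y_0)$. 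Hence the coordinate plane equals that at $t=0$.

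Independence of $N$ is automatic: for $N'>N$, the coordinate parametrization on $[0,N']\times T_1$ restricts on $[0,N]\times T_1$ to the one already used, so the coordinate expressions of the four bundles coincide on the overlap.

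The only minor care-point is to make the identification $T_{(x_0,y_0)}T_1 \hookrightarrow T_{(0,x_0,y_0)}([0,N]\times T_1)$ well-defined (it is, because the coordinates split as a product), and to note that $E^{ss}_X$ and $E^{uu}_X$ are not tangent to $T_1$; but this is not an obstacle, because the splitting of $T_{(x_0,y_0)}M$ as $\RR X\oplus T_{(x_0,y_0)}T_1$ gives each such vector canonical $(\partial/\partial t, \partial/\partial x, \partial/\partial y)$-coordinates. There is no real difficulty here; the lemma just records the naturality of the flow-box coordinates.
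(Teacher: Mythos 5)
Your proof is correct and it is exactly the paper's argument, just spelled out: the paper's one-line proof ("the bundles are invariant under the flow and $X$ is $\partial/\partial t$ in these coordinates") is precisely your computation that $d\Phi_{(t,p)}^{-1}\bigl(DX_t(E^*_X(p))\bigr) = d\Phi_{(0,p)}^{-1}\bigl(E^*_X(p)\bigr)$ via the decomposition $w = aX(p)+v$, $v\in T_pT_1$. The extra remarks about $E^{ss},E^{uu}$ not being tangent to $T_1$ and about $N$-independence are sound and appropriately noted.
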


\begin{proof}
This follows from the fact that the bundles are invariant under
the flow and the vector-field $X$ is $\frac{\partial}{\partial t}$
in this coordinates.
\end{proof}

We denote by $H_N\colon \cU_N \to [0,1]\times \TT^2$ the
diffeomorphisms defined by $H_N(t,p)=(\frac tN,
\theta_1^{-1}(p))$.

We denote $\cF^{cs}_N=H_N(W^{cs}_N)$, $\cF^{cu}_N=H_N(W^{cu}_N)$,
$\cF^{ss}_N=H_N(W^{ss}_N)$, $\cF^{uu}_N=H_N(W^{uu}_N)$.

\begin{lemm}\label{l.limit} For every $N>0$, $\cF^{cs}_N=\cF^{cs}$ and $\cF^{cu}_N=\cF^{cu}$, where
$\cF^{cs}$ and $\cF^{cu}$ are the $2$-dimensional foliation on $
[0,1]\times \TT^2$ defined in Section~\ref{s.perturbation}.

The tangent bundle to $\cF^{ss}_N$ and to $\cF^{uu}_N$ converges
in the $C^0$ topology to the tangent bundle to the foliations
$\cF^{ss}$ and $\cF^{uu}$, defined in
Section~\ref{s.perturbation}, as $N$ goes to $+\infty$.

\end{lemm}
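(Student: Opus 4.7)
The plan is to combine three ingredients: the flow-invariance of the four $X$-invariant foliations, Lemma~\ref{l.expressionindep} (the tangent directions at $(t,x,y)\in\cU_N\simeq[0,N]\times T_1$ depend only on $(x,y)$), and the explicit form of $H_N$, which scales the flow direction by $1/N$ and acts as $D\theta_1^{-1}$ on the $T_1$ factor.

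For the first equality, I would observe that $W^{cs}_X$ contains the flow direction $X=\frac{\partial}{\partial t}$ at every point; combined with Lemma~\ref{l.expressionindep}, this forces the tangent bundle $TW^{cs}_N$ to equal $T[0,N]\oplus TF^s_1$, so $W^{cs}_N$ is literally the product foliation $[0,N]\times F^s_1$ on $[0,N]\times T_1$. Pushing forward by $H_N$ sends a leaf $[0,N]\times L$ to $[0,1]\times\theta_1^{-1}(L)$, a leaf of $[0,1]\times F^s_1=\cF^{cs}$. The equality $\cF^{cu}_N=\cF^{cu}$ is identical after replacing $F^s_1$ by $F^u_1$.

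For the convergence statement, I would write the tangent direction of $W^{ss}_N$ at $(t,p)$ as $\RR\bigl(a(p)\frac{\partial}{\partial t}+w(p)\bigr)$ with $w(p)\in T_pT_1$; by Lemma~\ref{l.expressionindep} the pair $(a(p),w(p))$ is independent of $t$ and of $N$. Since $E^{ss}_X\subset E^{cs}_X$ and $E^{cs}_X\cap T_pT_1=TF^s_1$, the vector $w(p)$ is tangent to $F^s_1$. Moreover, $E^{ss}_X$ is transverse to the flow direction, so $w(p)\neq 0$ for all $p$, and compactness of $T_1$ gives a uniform lower bound on $|w(p)|$. Applying $DH_N$ yields the tangent direction of $\cF^{ss}_N$ at $(t/N,\theta_1^{-1}(p))$ as $\RR\bigl(\tfrac{a(p)}{N}\frac{\partial}{\partial s}+D\theta_1^{-1}(w(p))\bigr)$; after normalizing and letting $N\to\infty$, this converges uniformly in $p$ (and in $t$, since the source direction does not depend on $t$) to $\RR D\theta_1^{-1}(w(p))$, which is precisely the tangent line of $\cF^{ss}=(\{s\}\times F^s_1)_{s\in[0,1]}$. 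The argument for $\cF^{uu}_N$ is the same.

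The only delicate point, and the closest thing to an obstacle, is uniformity of the convergence: one needs the normalization of $\tfrac{a(p)}{N}\frac{\partial}{\partial s}+D\theta_1^{-1}(w(p))$ to stabilize at a rate independent of $p$, which is exactly what the uniform lower bound on $|w(p)|$ (via compactness of $T_1$ and transversality of $E^{ss}_X$ to $X$) provides, giving a convergence rate of order $O(1/N)$.
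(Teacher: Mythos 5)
Your proposal is correct and follows the same route as the paper's (much terser) proof: the product structure of $\cF^{cs}_N$, $\cF^{cu}_N$ comes from flow-saturation plus Lemma~\ref{l.expressionindep}, and the convergence comes from $H_N$ compressing the $t$-coordinate by $1/N$ while acting independently of $N$ on the $T_1$ factor. Your elaboration of the uniformity point (the $t$- and $N$-independence of $(a(p),w(p))$, $w(p)\in TF^s_1$ nonzero by transversality to the flow, and compactness of $T_1$) is exactly the content the paper leaves implicit.
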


\begin{proof}
The first assertion is a direct consequence of the fact that
$\cF^{cs}_N$ and $\cF^{cu}_N$ are saturated by the orbits of the
flow and Lemma \ref{l.expressionindep}.

 The second assertion follows from the
fact that the diffeomorphism is independent of $N$ in the first
coordinate and compresses the $t$ coordinate by $\frac 1N$.

\end{proof}

As a direct consequence of Lemmas~\ref{l.transverse} and
~\ref{l.limit} we obtain

\begin{coro}\label{c.transverse} Let $\cG$ be the diffeomorphism defined in Lemma~\ref{l.transverse}.
Then , there is $N_0>0$ so that for any $N\geq N_0$  one  has:

$\cG(\cF^{cu}_N)$ is transverse to $\cF^{ss}_N$ and
$\cG(\cF^{uu}_N)$ is transverse to $\cF^{cs}_N$.

\end{coro}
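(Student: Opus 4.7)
The plan is to combine Lemmas~\ref{l.transverse} and~\ref{l.limit} with the fact that transversality of tangent distributions is open in the $C^0$ topology. Lemma~\ref{l.limit} gives $\cF^{cs}_N=\cF^{cs}$ and $\cF^{cu}_N=\cF^{cu}$ on the nose for every $N>0$, while the tangent bundles of $\cF^{ss}_N$ and $\cF^{uu}_N$ only converge in the $C^0$ topology to those of $\cF^{ss}$ and $\cF^{uu}$ as $N\to+\infty$. Since $\cG$ is a fixed smooth diffeomorphism, its pushforward is continuous on line fields in the $C^0$ topology, so the tangent bundle of $\cG(\cF^{uu}_N)$ also converges in $C^0$ to that of $\cG(\cF^{uu})$.

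By Lemma~\ref{l.transverse}, $\cG(\cF^{cu})$ is transverse to $\cF^{ss}$ and $\cG(\cF^{uu})$ is transverse to $\cF^{cs}$ at every point of the compact set $[0,1]\times\TT^2$. Transversality of a $1$-dimensional and a $2$-dimensional tangent distribution in a $3$-manifold is measured by a continuous, nowhere-vanishing function of the two distributions (for instance, the absolute value of a wedge product computed in local frames), so by compactness this function is bounded below by a positive constant and survives any sufficiently small $C^0$ perturbation of either distribution. Applied to the first pair, with $\cG(\cF^{cu}_N)=\cG(\cF^{cu})$ held fixed and $\cF^{ss}_N\to\cF^{ss}$, this yields some $N_1$ with $\cG(\cF^{cu}_N)\pitchfork\cF^{ss}_N$ for $N\ge N_1$; applied to the second pair, with $\cF^{cs}_N=\cF^{cs}$ fixed and $\cG(\cF^{uu}_N)\to\cG(\cF^{uu})$, it yields some $N_2$ with the analogous conclusion for $N\ge N_2$. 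Taking $N_0=\max(N_1,N_2)$ gives the corollary.

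There is essentially no genuine obstacle here, since the substantive work was already done in the two preceding lemmas. The only point that deserves a moment of attention is that the perturbation threshold must be uniform in the base point, and this is immediate from the compactness of $[0,1]\times\TT^2$.
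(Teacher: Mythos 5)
Your proposal is correct and follows exactly the route the paper intends: the paper states the corollary as ``a direct consequence of Lemmas~\ref{l.transverse} and~\ref{l.limit}'' with no further argument, and you have simply filled in the standard details (openness of transversality in the $C^0$ topology, uniformity via compactness of $[0,1]\times\TT^2$, and continuity of $\cG_\ast$ on line fields). Nothing is missing.
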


\section{Establishing partial hyperbolicity}\label{s.Criteria}

We just recall a classical criterion for partial hyperbolicity. As
before, we remain in dimension 3 for simplicity.

Let $g$ be a diffeomorphism on a compact $3$-dimensional manifold
$M$. Assume that there is a codimension $1$ submanifold cutting
$M$ in two compact manifolds with boundary $M^+$ and $M^-$ so that
$M^+$ is an \emph{attracting region} for $f$ (i.e. $f(M^+)\subset
\mathrm{Int}(M^+)$) and $M^-$ is a \emph{repelling region} (i.e.
attracting region for $g^{-1})$.

Assume that the maximal invariant set $A$ in $M^+$ and the maximal
invariant set $R$ in $M^-$ admit a (absolute) partially hyperbolic
splitting $E^{ss}\oplus E^c\oplus E^{uu}$.

Then $E^{ss}$ and $E^{ss}\oplus E^c$ admit a unique invariant
extension $E^{ss}$ and $E^{cs}$ on $M\setminus R$ and
symmetrically $E^{uu}$ and $E^{c}\oplus E^{uu}$ admit a unique
invariant extension $E^{uu}$ and $E^{cu}$ on $M\setminus A$.

\begin{rema}\label{r.uniqueextensionAnosovflows}
Assume that $g$ coincides with an Anosov flow $Z$ in a
neighborhood $U$ of $A$. Then, the bundles $E^{cs}$ and $E^{ss}$
in $M\setminus R$ coincide exactly with the tangent spaces of the
foliation $f^{-n}(W^{cs}_Z\cap U)$ and $f^{-n}(W^{ss}_Z \cap U)$.
A symmetric property holds for $E^{cu}$ and $E^{uu}$. Notice that
the center bundle cannot be a priori extended to these sets.
\end{rema}

A compact set $\cU$ will be called  a \emph{fundamental domain} of
$M \setminus (A \cup R)$ if every orbit of $g$ restricted to $M
\setminus (A \cup R)$ intersects $\cU$ in at least one point.

\begin{theo}\label{t.criterion} Let $g: M
\to M$ be a $C^1$-diffeomorphism. Assume that:
\begin{itemize}
\item there exists a codimension one submanifold cutting $M$ into
an attracting and a repelling regions with maximal invariant sets
$A$ and $R$ which are (absolutely) partially hyperbolic;

\item there exists a compact fundamental domain $\cU$ of
$M\setminus (A\cup R)$ such that if $E^{ss}_A$,$E^{cs}_A$ denote
the extensions of the bundles $E^{ss}$ and $E^{ss}\oplus E^c$ of
$A$ to $M \setminus R$ and $E^{uu}_R$, $E^{cu}_R$ denote the
extensions of $E^{uu}$,$E^{c}\oplus E^{uu}$ of $R$ to $M\setminus
A$ then $E^{cs}_A$ is transverse to $E^{uu}_R$ and $E^{cu}_R$ is
transverse to $E^{ss}_A$ at each point of $\cU$.
\end{itemize}
Then $g$ is (absolutely) partially hyperbolic on $M$.
\end{theo}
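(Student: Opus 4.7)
I plan to assemble a continuous $Dg$-invariant splitting $TM=E^{ss}\oplus E^c\oplus E^{uu}$ on all of $M$ out of the given extensions, and then promote the partial hyperbolicity estimates on $A$ and $R$ to uniform global estimates using the compactness of the fundamental domain $\cU$.

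First, I would define the three bundles globally: put $E^{ss}:=E^{ss}_A$ on $M\setminus R$, $E^{uu}:=E^{uu}_R$ on $M\setminus A$, and $E^c:=E^{cs}_A\cap E^{cu}_R$ on $M\setminus(A\cup R)$---this last is a continuous line field by the transversality hypothesis, which propagates from $\cU$ to all of $M\setminus(A\cup R)$ by the $Dg$-invariance of the four bundles involved. On $A$ and on $R$, I would use the original splittings coming from partial hyperbolicity there. Continuity of the resulting global splitting at points of $A$ and $R$ is the first nontrivial point: near $A$, transversality of $E^{uu}_R$ to $E^{cs}_A$ places $E^{uu}_R$ inside a $Dg$-invariant unstable cone around $E^{uu}_A$, and the standard cone contraction coming from the absolute partial hyperbolicity on $A$ forces $E^{uu}_R\to E^{uu}_A$ as one approaches $A$; symmetrically for $E^{ss}_A$ near $R$. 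Invariance of the global splitting is then automatic: $E^{ss}$ and $E^{uu}$ are invariant by construction, and $E^c=E^{cs}\cap E^{cu}$ is the intersection of two invariant bundles.

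Next, I would upgrade the bounds to the whole of $M$. Pick an iterate $\ell_0$ and constants realizing absolute partial hyperbolicity simultaneously on $A$ and on $R$. By continuity and compactness, slightly weakened strict bounds separating the three norms $\|Dg^{\ell_0}|_{E^{ss}}\|$, $\|Dg^{\ell_0}|_{E^c}\|$, $\|Dg^{\ell_0}|_{E^{uu}}\|$ hold uniformly on an open neighborhood $U$ of $A\cup R$, with the stable bound below $1$ and the unstable one above. The complement $K:=M\setminus U$ is a compact subset of $M\setminus(A\cup R)$, so the three norms are bounded above and below there by some constant $C>1$. A compactness argument, using that $\cU$ is a compact fundamental domain, shows that every $g^{\ell_0}$-orbit spends at most a uniform number $K_0$ of iterates inside $K$. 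Telescoping the products $Dg^{m\ell_0}|_{E^\sigma(x)}$ into the (at most $K_0$) factors coming from $K$ and the remaining factors coming from $U$, one bounds each composition by $C^{K_0}$ times an exponential expression in the $U$-constants raised to the power $m-K_0$. For $m$ sufficiently large the exponential factors dominate the multiplicative error $C^{K_0}$, producing strict uniform gaps between the three bundle norms---stable below $1$, unstable above---so that choosing $\lambda,\mu$ in these gaps gives absolute partial hyperbolicity on $M$ with iterate $m\ell_0$.

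The main obstacle is the center direction: unlike the extremes, $E^c$ carries no intrinsic contraction or expansion away from $A\cup R$, and one must ensure that its domination gaps with both $E^{ss}$ and $E^{uu}$ survive the bounded transit across $K$. They do precisely because $K_0$ is uniform while the domination built up in $U$ grows exponentially in $m$.
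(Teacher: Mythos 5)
Your proposal is correct and follows essentially the same route as the paper: define $E^{ss}:=E^{ss}_A$, $E^{uu}:=E^{uu}_R$, take $E^c$ as the transverse intersection $E^{cs}_A\cap E^{cu}_R$, prove continuity at $A\cup R$ via transversality plus domination, and then exploit the fact that orbits spend a uniformly bounded number of iterates outside a neighborhood of $A\cup R$. Your telescoping estimate simply fills in the quantitative detail that the paper leaves as a one-sentence remark, so nothing further is needed.
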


\begin{proof} We have a well defined splitting $E^{ss} \oplus E^c \oplus E^{uu}$ above $A$ and $R$
and we can define the bundles $E^{ss}$ and $E^{uu}$ everywhere as
$E^{ss}:= E^{ss}_A$ and $E^{uu}:=E^{uu}_R$ in $M\setminus (A \cup
R)$.

The transversality conditions we have assumed on $\cU$ allows us
to define the $E^c$ bundle in $M \setminus (A \cup R)$ as the
intersection between $E^{cs}_A$ and $E^{cu}_R$. These intersect in
a one-dimensional subbundle thanks to our transversality
assumptions.

Now, let us show that the splitting we have defined is
(absolutely) partially hyperbolic. To see this, it is enough to
show that the decomposition is continuous. Indeed, if this is the
case, one can use the fact that given a neighborhood $U$ of $A
\cup R$ there exists $N>0$ such that every point outside $U$
verifies that every iterate larger than $N$ belongs to $U$.
Together with continuity of the bundles and the partially
hyperbolicity along $A \cup R$ this allows to show that if
$g|_{A\cup R}$ is (absolutely) partially hyperbolic, then it must
be (absolutely) partially hyperbolic globally.

To show continuity of the bundles, notice first that since for a
point $x\in M \setminus (A\cup R)$ the bundle $E^{uu}_R$ is
transverse to $E^{cs}_A$ one has that as one iterates forward the
point $g^n(x)$ approaches $A$ while the bundle $D_x g^n(E^{uu}_R)$
must approach $E^{uu}$ by the transversality and domination. The
same argument shows that $E^{ss}$ is also continuous. The fact
that $E^{c}$ glues well with $E^c$ along $A$ follows from the fact
that $E^{cs}_A$ is invariant and $E^{c}$ is transverse to
$E^{ss}_A$ in $E^{cs}_A$. The symmetric argument gives continuity
of $E^{c}$ as one approaches $R$ and this concludes the proof.

\end{proof}

\section{The example: perturbation of the time $N$ of the
flow}\label{s.example}

In this section we construct the example announced in Theorem
\ref{t.main} and prove it is (absolute) partially hyperbolic.

We fix $N\geq N_0$ as in Corollary~\ref{c.transverse}.

Consider  $M\setminus (A_X\cup R_X)$.  Let $V_1$ be the
$X$-invariant open subset of $M$ consisting in the point whose
orbit crosses $T_1$.

Consider the diffeomorphism $\cG$ on $[0,1] \times \TT^2$ defined
in Lemma \ref{l.transverse}. We consider $G: M \to M$ defined as
the identity outside $\cU_N$ and $G= H_N^{-1} \circ \cG \circ H_N$
in $\cU_N$. Then $G$ is a smooth diffeomorphism from how $\cG$ was
defined.

Define the diffeomorphism $f: M \to M$:

$$ f= G \circ X_N .$$

We will prove that the diffeomorphism $f$ defined above satisfies
all the conclusion of Theorem~\ref{t.main}. We begin by first
demonstrating that $f$ is partially hyperbolic.

\begin{theo}\label{p.partiallyhyp} The diffeomorphism $f$ is absolutely partially hyperbolic.
\end{theo}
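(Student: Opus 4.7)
The plan is to apply Theorem~\ref{t.criterion} with separating codimension-one submanifold $T_1\cup T_2$ and with attracting region $M^+\supset A_X$ and repelling region $M^-\supset R_X$. The key observation, obtained from $\cU_N$ being a fundamental domain of $X_N$ on $V_1\setminus(A_X\cup R_X)$, is that $f=G\circ X_N$ in fact coincides with $X_N$ everywhere except on the single fundamental domain $\cU_{-N}:=X_{-N}(\cU_N)\subset M^-$; indeed, $G(X_N(p))=X_N(p)$ as soon as $X_N(p)\notin\cU_N$, i.e., as soon as $p\notin\cU_{-N}$. In particular $f=X_N$ on neighborhoods of $A_X$ and $R_X$, the regions $M^+$ and $M^-$ are positively and negatively $f$-invariant respectively, and the maximal invariant sets are $A_X$ and $R_X$ (the perturbation lives in a wandering region and creates no new recurrence); absolute partial hyperbolicity on $A_X\cup R_X$ is inherited directly from the time-$N$ map of the Anosov flow.

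As compact fundamental domain of $f$ on $M\setminus(A_X\cup R_X)$ I would take $\cU:=\cU_N\cup\cU_N^{(2)}$, where $\cU_N^{(2)}$ is the analogous fundamental domain built from $T_2$. On $\cU_N^{(2)}$ the perturbation $G$ is invisible, since $V_1\cap V_2=\emptyset$ and $G$ is supported in $V_1$; thus $f=X_N$ there and the four extended bundles reduce to the weak and strong invariant bundles of the Anosov flow $X$, which are already transverse in the required pairs.

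The main content of the proof is the transversality check on $\cU_N$. For $p$ in the interior of $\cU_N$ every forward iterate $f^k(p)=X_{kN}(p)$ lies outside $\cU_N$, so pulling the Anosov weak-stable and strong-stable bundles back from a neighborhood of $A_X$ by $(DX_{kN})^{-1}$ and using their flow-invariance yields $E^{cs}_A=E^{cs}_X$ and $E^{ss}_A=E^{ss}_X$ on $\cU_N$. Going the other way, the backward orbit $f^{-k}(p)=X_{-kN}(G^{-1}(p))$ tends to $R_X$; but reconstructing $p$ forward applies $G$ exactly once, at the last step, because only at that step does $X_N(f^{-1}(p))=G^{-1}(p)$ enter $\cU_N$. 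The chain rule therefore factors as $Df^k_{f^{-k}(p)}=DG_{G^{-1}(p)}\circ DX_{kN}$, and flow-invariance of the Anosov weak- and strong-unstable bundles gives
\[
E^{cu}_R=G_*(E^{cu}_X)\quad\text{and}\quad E^{uu}_R=G_*(E^{uu}_X)\quad\text{on }\cU_N.
\]

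Transporting these four bundles to the model $[0,1]\times\TT^2$ via $H_N$ turns the transversalities required by Theorem~\ref{t.criterion} into $\cF^{cs}_N\pitchfork\cG(\cF^{uu}_N)$ and $\cF^{ss}_N\pitchfork\cG(\cF^{cu}_N)$, both of which hold for $N\ge N_0$ by Corollary~\ref{c.transverse}. The hard part, and the whole point of the construction, is precisely the asymmetry established in the previous paragraph: $G$ does not affect the extensions descending from $A_X$, but contributes exactly one factor of $DG$ to the extensions ascending from $R_X$. This asymmetry is exactly matched by the asymmetric statement of Corollary~\ref{c.transverse}, and once it is in place, absolute partial hyperbolicity of $f$ follows immediately from Theorem~\ref{t.criterion}.
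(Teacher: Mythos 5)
Your proposal is correct and follows essentially the same route as the paper: apply Theorem~\ref{t.criterion}, observe that on $\cU_N$ the extensions from the attractor are the unperturbed bundles $E^{cs}_X$, $E^{ss}_X$ while those from the repeller pick up exactly one factor of $DG$, giving $G_*(E^{cu}_X)$, $G_*(E^{uu}_X)$, and conclude via Corollary~\ref{c.transverse}. Your explicit chain-rule bookkeeping along backward orbits is just a more detailed rendering of the paper's invariance argument for $\bigcup_{t\geq 0}X_t(T_1)$ and $\bigcup_{t\leq -N}X_t(T_1)$.
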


\begin{proof}
First notice that the chain recurrent set of $f$ coincides with
the non-wandering set of $X_N$ and thus of $X$, that is $A_X\cup
R_X$. Furthermore the dynamics of $f$ coincides with the one of
$X_N$ in a neighborhood of $A_X\cup R_X$. In particular, $f$ is
absolute partially hyperbolic in restriction to $A_X\cup R_X$.

According to Theorem~\ref{t.criterion} we must check that the
extension $E^{ss}_A$ and $E^{cs}_A$ of the bundles $E^{ss}_X$,
$E^{cs}_X$ in $A_X$ and the extensions $E^{cu}_R$, $E^{uu}_R$ of
$E^{cu}_X$ and $E^{uu}_X$ on $R_X$ satisfy the transversality
conditions between $E^{cs}_A$ and $E^{uu}_R$ and $E^{ss}_A$ and
$E^{cs}_R$ in a compact fundamental domain $\cU$.

Recall that $E^{ss}_A$, $E^{cs}_A$ in a neighborhood of $A_X$ are
the tangent bundles to the strong stable and center stable
foliations of $A_X$ and $E^{cu}_R$, $E^{uu}_R$ coincide with the
tangent bundles to the strong unstable and center unstable
foliations of $R_X$ (see Remark \ref{r.uniqueextensionAnosovflows}
and notice that $f$ coincides with an Anosov flow in neighborhoods
of $A_X$ and $R_X$).

Notice that $f$ admits a fundamental domain having two connected
components, one (denoted as $\De_{1,N}$) bounded by $T_1$ and
$X_N(T_1)$ and the other bounded by $T_2$ and $X_N(T_2)$ (denoted
as $\De_{2,N}$). Therefore $M\setminus (A_X\cup R_X)$ has two
connected components $V_1$ and $V_2$ which are the sets of points
whose orbits pass through one or the other fundamental domains.

On $V_2$ the diffeomorphism $f$ coincides with $X_N$ and the
bundles coincide with the invariant bundles of $X$, so that we get
the transversality conditions for free.

Thus we just have to show the transversality in $\De_{1,N}=
\cU_N$.

 Notice that $f$ coincides with $X_N$ on $$\bigcup_{t\geq 0} X_t(T_1).$$
 As $\bigcup_{t\geq 0} X_t(T_1)$ is positively invariant and the orbits
 tends to $A_X$ the stable and center stable foliations
 of $f$ coincide on that set with those of $X$.

 In particular, $E^{cs}_A$ and $E^{ss}_A$ in $\cU_N$ coincide with $E^{cs}_X$ and $E^{ss}_X$ respectively.

Notice also that $f$ coincides with $X_N$ on $$\bigcup_{t\leq -N}
X_t(T_1).$$
 As $\bigcup_{t\leq -N} X_t(T_1)$ is negatively invariant and the negative orbits
 tends to $R_X$ the unstable and center unstable foliations
 of $f$ coincide on that set with those of $X$.

 In particular, we obtain that $E^{cu}_R= G_\ast (E^{cu}_X)$ and $E^{uu}_R= G_\ast(E^{uu}_X)$ in $\cU_N$ which satisfy the transversality
 conditions thanks to Corollary \ref{c.transverse}.

 \end{proof}

The bundles of $f$ will be denoted as $E^{ss}_f$, $E^c_f$ and
$E^{uu}_f$. As usual, we denote $E^{cs}_f= E^{ss}_f \oplus E^c_f$
and $E^{cu}_f = E^{c}_f \oplus E^{uu}_f$.

\section{Dynamical coherence}\label{s.coherence}
We are now ready to prove that the diffeomorphism $f$ is robustly dynamically
coherent and that it cannot be leaf conjugate to the time one map
of an Anosov flow. Furthermore, the same result holds for any iterate and
any finite lift of $f.$

\begin{lemm}\label{l.Lyapunov} There exists $K>1$ so that for any unit vector $v\in E^c_f$ and any $n\in\ZZ$ one has
$$\frac1 K< \|Df^n(v)\|< K.$$
\end{lemm}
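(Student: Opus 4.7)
My plan is to reduce the claim to a bounded-Birkhoff-sum statement. Set $\psi(y)=\log\|X(y)\|$, a bounded continuous function on $M$ since the Anosov field $X$ is nonvanishing, and define
\[
\varphi(y)=\log\|Df|_{E^c_f(y)}\|-\bigl(\psi(f(y))-\psi(y)\bigr),
\]
which is continuous on $M$. For a unit $v\in E^c_f(x)$, the multiplicativity of $\|Df^n|_{E^c_f}\|$ yields $\log\|Df^n(v)\|=\sum_{k=0}^{n-1}\varphi(f^k(x))+\psi(f^n(x))-\psi(x)$. The boundary term is bounded by $2\sup|\psi|$, so it suffices to bound the Birkhoff sum of $\varphi$ uniformly in $x$ and $n$; the case $n<0$ is handled by the same argument applied to $f^{-1}$.

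First, I would show that $\varphi\equiv 0$ on $A_X\cup R_X\cup V_2$. On these sets, $f$ coincides locally with $X_N$ and $E^c_f=\langle X\rangle$: this is built into the partially hyperbolic splitting on $A_X\cup R_X$, and on $V_2$ it follows because $f=X_N$ globally there, forcing $E^c_f$ to be the flow direction. The flow identity $DX_N(X)=X\circ X_N$ then gives $\|Df|_{E^c_f(y)}\|=\|X(f(y))\|/\|X(y)\|$, i.e.\ $\varphi(y)=0$. Hence $\varphi$ is supported in $V_1$.

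The key estimate is that $|\varphi(f^k(x))|$ decays exponentially as $|k|\to\infty$ along any orbit meeting $V_1$. Using the description of $E^{cs}_A$ and $E^{cu}_R$ from the proof of Theorem~\ref{p.partiallyhyp} together with the explicit shear form of $G$ on $\cU_N$, a direct computation shows that at a point $y$ of $V_1\cap M^+$ lying $j$ fundamental domains of $X_N$ past $\cU_N$, the bundle $E^c_f(y)$ is spanned by a vector of the form $X(y)+w(y)$ with $w(y)\in E^{ss}_X(y)$ whose Riemannian norm is exponentially contracted in $j$ by the Anosov stable rate; symmetrically, on the backward side of $\cU_N$ inside $V_1\cap M^-$ the correction lies in $E^{uu}_X$ and is contracted in the number of backward iterations of $DX_N$. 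Consequently $\varphi(f^k(x))$ decays geometrically in $|k-k_0|$, where $k_0$ is the unique step at which the orbit crosses $\cU_N$. The geometric tails on both sides together with the single bounded term at $k_0$ give the uniform estimate $\sum_{k\in\ZZ}|\varphi(f^k(x))|\leq C$ independent of $x$, from which $K$ is produced.

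The principal technical step is the exponential decay of the tilt in the third paragraph; it rests on decomposing the $DG$-shear along the Anosov splitting $TM=E^{ss}_X\oplus\langle X\rangle\oplus E^{uu}_X$ at points of $\cU_N$ and invoking the uniform stable and unstable exponents of $DX_N$, which are the same rates governing the absolute partial hyperbolicity of $X_N$.
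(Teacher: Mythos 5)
Your argument is correct, but it takes a more quantitative route than the paper's. The paper's proof is soft: it chooses a positively $X_t$-invariant neighborhood $U_A$ of $A_X$ on which $f=X_N$ and $E^{cs}_f=E^{cs}_X=E^{ss}_X\oplus\RR X$, decomposes a unit vector $v\in E^c_f$ as a flow component plus an $E^{ss}_X$ component, and uses only the compactness of $M$ together with the transversality of $E^c_f$ and $E^{ss}_f$ to conclude that the flow component is uniformly bounded away from $0$ and $\infty$; forward iteration then preserves the flow component up to the bounded factor $\|X(f^n y)\|/\|X(y)\|$ and contracts the $E^{ss}$ part, the symmetric statement holds near $R_X$ for backward iterates, and an orbit spends at most $n_0$ iterates outside $U_A\cup U_R$. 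Your reduction to the coboundary $\psi=\log\|X\|$ plus the summable cocycle $\varphi$ rests on the same two facts ($E^c_f=\RR X$ on $A_X\cup R_X\cup V_2$, and $f=X_N$ away from $\cU_N$), but your key step --- exponential convergence of $E^c_f$ to $\RR X$ along orbits, obtained by pushing $G_\ast(E^{cu}_X)$ forward under the dominated splitting and intersecting with $E^{cs}_X$ --- proves strictly more than is needed: mere uniform boundedness of the tilt, which follows from transversality and compactness alone, already suffices. The price is that the "direct computation" you invoke is the one genuinely technical point of your proof and would need to be written out (uniform transversality of $G_\ast(E^{cu}_X)$ to $E^{ss}_X$ on the compact $\cU_N$ from Corollary~\ref{c.transverse}, then the standard exponential convergence of plane fields transverse to $E^{ss}_X$ toward $E^{cu}_X$ under domination); the benefit is a sharper conclusion, namely that $\log\|Df^n|_{E^c_f}\|$ is a bounded coboundary up to a uniformly summable error, which the paper's argument does not give. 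Two cosmetic points: an $f$-orbit may meet the closed fundamental domain $\cU_N$ in two consecutive iterates rather than one, and the adjacent strip $\bigcup_{-N\le t\le 0}X_t(T_1)$ also contributes boundedly many nonzero terms of $\varphi$; neither affects the estimate.
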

\begin{proof}Notice that given any neighborhoods $U_R$ and $U_A$ of $R_X$ and $A_X$, there is $n_0>0$ so that
$f^{n_0}(M\setminus U_R)$ is contained in $U_A$.

We choose $U_A$ as being a positively $X_t$-invariant neighborhood
of $A_X$ on which $f$ coincides with $X_N$. In $U_A$ the bundles
$E^{ss}_f$ and $E^{cs}_f$ coincide with  $E^{ss}_X$  and
$E^{cs}_X$. As $E^c_f$ is transverse to $E^{ss}_f$ on the compact
manifold $M$, we get that any unit vector of $E^c_f$ in $U_A$ has a
component in $\RR X$ uniformly bounded from below and above and a
component in $E^{ss}_X$  uniformly bounded (from above). Therefore,
the positive iterates of $v$ by $Df$ are uniformly bounded from
below and above.

The same phenomena occur in a neighborhood $U_R$ of $R_X$ when we consider
backward iterates. Now the result is established by using the invariance of
$E^c_f$ and the fact that any orbit spends at most $n_0$ iterates
outside $U_R\cup U_A$.
\end{proof}

The inequalities of Lemma~\ref{l.Lyapunov} force the curves tangent to the
center direction  to possess a very useful dynamical property; this is the content of Corollary~\ref{c.Lyapunov}. First, let us recall a definition.

\begin{defi}
Consider  a diffeomorphism $g$  with an invariant bundle $E
\subset TM$, one says that $g$ is \emph{Lyapunov stable} in the
direction of $E$ if given  any $\varepsilon>0$ there is $\delta>0$
so that any path $\gamma$ tangent to $E$ of length smaller than
$\delta$ verifies that the forward iterates $g^n(\gamma)$ have
length smaller than $\varepsilon$.
\end{defi}

As a direct consequence of Lemma \ref{l.Lyapunov} one gets:

\begin{coro}\label{c.Lyapunov}
The diffeomorphism $f$ is Lyapunov stable in the direction
$E^{cs}_f$. Symmetrically, $f^{-1}$ is Lyapunov stable in the
direction $E^{cu}_f$.
\end{coro}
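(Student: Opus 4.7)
The plan is to exploit the direct sum decomposition $E^{cs}_f = E^{ss}_f \oplus E^c_f$ together with the two complementary growth controls: exponential contraction on $E^{ss}_f$ (automatic from partial hyperbolicity established in Theorem~\ref{p.partiallyhyp}) and the uniform two-sided bound on $E^c_f$ supplied by Lemma~\ref{l.Lyapunov}. Since $E^{ss}_f$ and $E^c_f$ are continuous transverse subbundles over the compact manifold $M$, there is a constant $C>0$ so that any $w\in E^{cs}_f$ decomposes as $w=w^s+w^c$ with $\|w^s\|,\|w^c\|\le C\|w\|$.

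The key pointwise estimate is then immediate. For $n\ge 0$, pick $\lambda\in(0,1)$ with $\|Df^n|_{E^{ss}_f}\|\le\lambda^n$ and apply Lemma~\ref{l.Lyapunov} to the unit vector $w^c/\|w^c\|$ to obtain
\[
\|Df^n(w)\|\;\le\;\|Df^n(w^s)\|+\|Df^n(w^c)\|\;\le\;\lambda^n\|w^s\|+K\|w^c\|\;\le\;C(1+K)\,\|w\|.
\]
This bound is uniform in $n\ge 0$ and in the basepoint, which is exactly the input needed for Lyapunov stability.

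To conclude Lyapunov stability, I let $\gamma$ be a $C^1$ path tangent to $E^{cs}_f$. Since $E^{cs}_f$ is $Df$-invariant, each $f^n\circ\gamma$ is again tangent to $E^{cs}_f$, and the pointwise estimate integrates to
\[
\operatorname{length}(f^n\circ\gamma)\;=\;\int\|Df^n(\gamma'(t))\|\,dt\;\le\;C(1+K)\,\operatorname{length}(\gamma).
\]
Given $\varepsilon>0$, choosing $\delta=\varepsilon/(C(1+K))$ proves the definition. For the symmetric statement I repeat the argument verbatim with $f^{-1}$ in place of $f$: the other half of Lemma~\ref{l.Lyapunov} (it is stated for all $n\in\ZZ$) gives the same bound $K$ for backward iterates along $E^c_f$, while $E^{uu}_f$ is uniformly contracted by $Df^{-1}$ since $f$ is absolutely partially hyperbolic, so the decomposition $E^{cu}_f=E^c_f\oplus E^{uu}_f$ yields the analogous constant.

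There is no real obstacle here; the only thing to watch is that the two ingredients used are genuinely uniform. The exponential contraction of $E^{ss}_f$ and the bounded distortion constants $C$, $K$ are all global (compactness of $M$ plus absolute partial hyperbolicity plus Lemma~\ref{l.Lyapunov}), so the resulting $\delta$ does not depend on the base point or on $n$, which is precisely what the definition of Lyapunov stability requires.
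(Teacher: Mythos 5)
Your proof is correct and follows essentially the same route as the paper, whose entire argument is the one-line observation that for any unit vector $v$ tangent to $E^{cs}_f$ the forward iterates $Df^n(v)$ have uniformly bounded norm; you have merely filled in the details the paper leaves implicit (the splitting $E^{cs}_f=E^{ss}_f\oplus E^c_f$ with uniformly bounded projections, the bound from Lemma~\ref{l.Lyapunov} on the $E^c_f$ component, and the integration of the pointwise estimate along the curve). The only cosmetic caveat is that the definition of partial hyperbolicity gives the contraction estimate on $E^{ss}_f$ only at time $\ell$, so $\|Df^n|_{E^{ss}_f}\|$ is bounded by $C'\lambda^{n/\ell}$ rather than $\lambda^n$ for arbitrary $n\ge 0$; since all you need is a uniform upper bound on $\|Df^n(w^s)\|$, this does not affect the argument.
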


\begin{proof}
Just notice that for any unit vector $v$ tangent to $E^{cs}_f$ the
forward iterates $Df^n(v)$ have uniformly bounded norm.
\end{proof}
This permits us to demonstrate

\begin{theo}\label{p.dynamicalcoherence} The diffeomorphism $f$ is robustly dynamically
coherent. Moreover, the bundle $E^c_f$ is uniquely integrable.
\end{theo}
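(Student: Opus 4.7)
The plan is to exhibit explicit $f$-invariant two-dimensional foliations $\cW^{cs}_f$ and $\cW^{cu}_f$ tangent to $E^{cs}_f$ and $E^{cu}_f$, and then deduce dynamical coherence and unique integrability of $E^c_f = E^{cs}_f \cap E^{cu}_f$ from their transverse intersection.

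For the center-stable direction, I claim the weak-stable foliation $W^{cs}_X$ of the Anosov flow already does the job, i.e.\ $E^{cs}_f = E^{cs}_X$ globally. The key observation is that in the coordinates of Section~\ref{s.perturbation} the foliation $\cF^{cs} = [0,1]\times F^s_1$ is invariant under translations in the $y$-coordinate (by the corollary following Lemma~\ref{l.coordinates}), and the twist $\cG(t,x,y)=(t,x,y+\rho(t))$ acts by such a $y$-translation for each fixed $t$. Hence $G$ preserves the foliation $W^{cs}_X$ on $\cU_N$. Combined with the $X_N$-invariance of $W^{cs}_X$ away from $\cU_N$ and the fact that $f=X_N$ on $\cU_N$ itself, iterating $f^{-1}$ from a neighborhood of $A_X$ shows that the extension $E^{cs}_A$ coincides with $E^{cs}_X$ throughout $M\setminus R_X$. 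Therefore $\cW^{cs}_f := W^{cs}_X$ is $f$-invariant and tangent to $E^{cs}_f$.

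For the center-unstable direction the twist $G$ does \emph{not} preserve $W^{cu}_X$ (indeed this failure is exactly what produces the transversality in Corollary~\ref{c.transverse}), so I construct $\cW^{cu}_f$ piecewise: on $\cU_N$ set $\cW^{cu}_f := G(W^{cu}_X|_{\cU_N})$; on each $X_{nN}(\cU_N)$ for $n\geq 1$ set it to be the $X_{nN}$-image of this twisted foliation; elsewhere in $M$ set $\cW^{cu}_f := W^{cu}_X$. Because $\rho\equiv 0$ near $t=0$ and $\rho\equiv 1$ near $t=1$ (and translation by $1$ is trivial on $\TT^2$), the map $G$ is literally the identity in a neighborhood of $\partial\cU_N$, so the three types of pieces glue across each torus $X_{nN}(T_1)$ to a continuous foliation of $M$. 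That this foliation is $f$-invariant follows from $f=G\circ X_N$ applied to the $X_{-N}(\cU_N)$-to-$\cU_N$ transition, together with $f=X_N$ elsewhere, and its tangent bundle on $\cU_N$ is $G_\ast(E^{cu}_X)=E^{cu}_f$ by the proof of Theorem~\ref{p.partiallyhyp}; the equality propagates to all iterates by $f$-invariance.

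Dynamical coherence is then immediate from the transverse intersection $\cW^c_f := \cW^{cs}_f \cap \cW^{cu}_f$; unique integrability of $E^c_f$ follows because each of $\cW^{cs}_f$ and $\cW^{cu}_f$ is uniquely integrable (the first as an Anosov weak foliation, the second as a piecewise diffeomorphic image of one), so any $C^1$ curve tangent to $E^c_f$ lies in a single leaf of each, hence in a single center leaf. For robust dynamical coherence the transversality conditions of Corollary~\ref{c.transverse} are $C^1$-open and the restriction of $f$ to its chain recurrent set is an Anosov flow — in particular normally hyperbolic and plaque expansive — so the Hirsch-Pugh-Shub stability theorem applied to $\cW^{cs}_f$ and $\cW^{cu}_f$ yields perturbed invariant foliations for every $g$ that is $C^1$-close to $f$. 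The main obstacle is verifying that the tangent planes of the piecewise $\cW^{cu}_f$ actually match at the interfaces $X_{nN}(T_1)$; this relies on $G$ being \emph{exactly} the identity (not merely tangent to the identity) near $\partial\cU_N$, which is precisely why $\rho$ was arranged to be locally constant at the endpoints in Lemma~\ref{l.transverse}. The robustness step is more delicate, since the explicit twist formula is unavailable for a general perturbation; one must fall back on normal hyperbolicity of the constructed foliations, invoking the Lyapunov stability of Corollary~\ref{c.Lyapunov} in place of direct computation if needed.
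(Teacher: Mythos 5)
Your center-stable half is wrong: $G$ does \emph{not} preserve $W^{cs}_X$, so the identity $E^{cs}_f=E^{cs}_X$ fails on the part of $V_1$ lying behind $T_1$. The corollary to Lemma~\ref{l.coordinates} says that $F^s_1$ is invariant under $y$-translations \emph{by a constant}; the leaves of $\cF^{cs}$ are the products $[0,1]\times L$, and $\cG$ sends such a leaf to the surface whose slice at height $t$ is $L+(0,\rho(t))$, which is a \emph{different} leaf of $F^s_1$ for each $t$ (except for the two compact vertical leaves). Equivalently, at the bundle level $D\cG$ shears $\partial/\partial t$ to $\partial/\partial t+\rho'(t)\,\partial/\partial y$, which does not lie in $T\cF^{cs}=\mathrm{span}(\partial/\partial t, v^s)$ wherever $\rho'(t)\neq 0$ and $v^s$ is not vertical. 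A quick sanity check: since $F^s_1$ and $F^u_1$ have identical invariance properties under $y$-translation, any argument showing $\cG$ preserves $\cF^{cs}$ would equally show it preserves $\cF^{cu}$ --- contradicting the fact (which you yourself use) that $\cG(\cF^{cu})\neq\cF^{cu}$ is the whole point of the twist. The correct statement, which is what the proof of Theorem~\ref{p.partiallyhyp} actually uses, is only that $E^{cs}_A=E^{cs}_X$ on $\bigcup_{t\geq0}X_t(T_1)$, because forward $f$-orbits of $\cU_N$ never meet the support of the twist; on $\bigcup_{t<0}X_t(T_1)$ the extension is $f^{-n}_\ast$ of the twisted bundle and differs from $E^{cs}_X$. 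So $\cW^{cs}_f$ must also be built piecewise, by pulling $W^{cs}_X$ back under $f$, symmetrically to your $\cW^{cu}_f$.

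Two further points. First, even for $\cW^{cu}_f$ the delicate interface is not the tori $X_{nN}(T_1)$ (where $G=\mathrm{id}$ makes the gluing trivial, as you say) but the accumulation on $A_X$: you must show that the leaves of $X_{nN\ast}G(W^{cu}_X|_{\cU_N})$ converge, together with their tangent planes, to $W^{cu}_X$ so as to form a genuine continuous foliation of $M\setminus R_X$; this is a graph-transform/domination argument, not a matching of charts. Second, your robustness step does not go through as stated: plaque expansiveness of $f$ restricted to the chain recurrent set says nothing about structural stability of the \emph{global} foliations $\cW^{cs}_f$, $\cW^{cu}_f$, which is what the Hirsch--Pugh--Shub theorem requires. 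The paper's route avoids all of these constructions: Lemma~\ref{l.Lyapunov} gives Lyapunov stability of $f$ along $E^{cs}_f$ and of $f^{-1}$ along $E^{cu}_f$ (Corollary~\ref{c.Lyapunov}), which by \cite[Theorem 7.5]{HHU3} simultaneously yields existence, uniqueness and plaque expansiveness of the invariant foliations, hence dynamical coherence, unique integrability of $E^c_f$, and, via \cite[Theorem 7.1]{HPS}, robustness. If you want to keep the explicit-construction approach for its concreteness, you will still need the Lyapunov-stability input (or an equivalent) to get uniqueness, unique integrability of $E^c_f$, and the plaque expansiveness needed for the robust statement.
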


\begin{proof} We refer the reader to \cite[Section 7]{HPS} or
\cite[Section 7]{HHU3} for precise definitions of some notions
which will appear in this proof (which are classical in the theory
of partially hyperbolic systems). Related arguments appear in
\cite{BBI}.

It is shown in \cite[Theorem 7.5]{HHU3} (see also \cite[Theorem
7.5]{HPS}) that $E^{cs}_f$ is tangent to a unique foliation
provided $f$ is Lyapunov stable with respect to $E^{cs}_f$.
Therefore, by Corollary \ref{c.Lyapunov} the bundle $E^{cs}_f$ is
tangent to a unique foliation tangent to $E^{cs}_f$. Moreover, it
is also established that under this assumptions, the unique foliation
$W^{cs}_f$ must be \emph{plaque-expansive} in the sense of
\cite[Section 7]{HPS}.

Applying the same result for $f^{-1}$ and $E^{cu}_f$ we deduce
dynamical coherence, and using \cite[Theorem 7.1]{HPS}, we may conclude
that the center-stable and center-unstable foliations are
\emph{structurally stable} (in particular, they exist for small
$C^1$ perturbations of $f$). This concludes the proof of robust
dynamical coherence.

Finally, unique integrability of $E^{c}_f$ follows by classical
arguments using the fact that curves tangent to $E^{c}_f$ are
Lyapunov stable for $f$ and $f^{-1}$ because of Lemma
\ref{l.Lyapunov} (see also \cite[Corollary 7.6]{HHU3}).

\end{proof}

Now we are ready to prove that the example cannot be leaf conjugate to the
time-one map of an Anosov flow---this is the result advertised
in Corollary \ref{c.conjecture}.
However, first we must establish an important property about the leaves of the
center-foliation $W^c_f$ of $f$. Recall from the previous section
that $\cU_N= \bigcup_{0 \leq t \leq N} X_t(T_1)$.

\begin{lemm}\label{l.centerfoliation}
The connected components of $W^c_f \cap \cU_N$ are arcs which join
$T_1$ with $X_N(T_1)$ with uniformly bounded length.
\end{lemm}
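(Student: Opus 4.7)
My plan is to work in the flow-box coordinates on $\cU_N\cong[0,N]\times T_1$ in which $X$ is $\partial_t$. From the proof of Theorem~\ref{p.partiallyhyp}, throughout $\cU_N$ one has $E^{cs}_f=E^{cs}_X$ and $E^{cu}_f=G_\ast(E^{cu}_X)$, and $W^{cs}_X$ is saturated by the flow of $X$. Consequently every leaf of $W^{cs}_f\cap\cU_N$ is a product surface $[0,N]\times L^s$, where $L^s$ is a leaf of the $1$-dimensional trace foliation $F^s_1=W^{cs}_X\cap T_1$. As $E^c_f\subset E^{cs}_f$, each connected component $\gamma$ of $W^c_f\cap\cU_N$ lies in one such surface.

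The core step, and the main obstacle, is to show that $E^c_f$ is transverse to every vertical torus $\{t\}\times T_1$ throughout $\cU_N$; equivalently, that every nonzero vector of $E^c_f$ has nonvanishing $\partial_t$-component. In the coordinates $G$ acts as $(t,x,y)\mapsto(t,x,y+\rho(t/N))$, so $dG$ preserves $\partial_x,\partial_y$ and sends $\partial_t$ to $\partial_t+(\rho'(t/N)/N)\partial_y$. Writing a generic $w'=a'\partial_t+b'\partial_x+c'\partial_y\in E^{cu}_X$, one checks that $dG(w')\in TT_1$ forces $a'=0$, in which case $dG(w')=w'\in E^{cu}_X\cap TT_1$, which is the tangent line to $F^u_1$. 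Thus $G_\ast(E^{cu}_X)\cap TT_1$ equals the tangent direction of $F^u_1$, and symmetrically $E^{cs}_X\cap TT_1$ equals the tangent direction of $F^s_1$. By the transversality of $F^s_1$ and $F^u_1$ on $T_1$, one concludes that $E^c_f\cap TT_1=\{0\}$; this argument uses Lemma~\ref{l.expressionindep} to guarantee that the relevant bundles do not depend on $t$.

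With this transversality established, the remaining conclusions follow by compactness. The angle between $E^c_f$ and the vertical tori $\{t\}\times T_1$ is bounded below by some $\theta>0$, so each arc $\gamma$, after parameterization by its $t$-coordinate, is a smooth graph $\gamma(t)=(t,p(t))$ over a sub-interval of $[0,N]$. Strict monotonicity of $t$ along $\gamma$ rules out self-intersections and closed loops, and maximality of $\gamma$ in $\cU_N$ forces the interval to be all of $[0,N]$, producing an arc joining $T_1$ to $X_N(T_1)$. Finally, $\|p'(t)\|$ is uniformly bounded (by compactness and the explicit form of $E^c_f$ just computed), so the length $\int_0^N\|\partial_t+p'(t)\|\,dt$ is bounded by a constant depending only on $f$ and $N$, independently of the leaf $L^s$ and the arc $\gamma$.
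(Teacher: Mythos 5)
Your proof is correct and follows essentially the same route as the paper's (much terser) argument: the Dehn twist preserves the vertical tori $\{t\}\times\TT^2$, so the center direction $E^c_f=E^{cs}_X\cap G_\ast(E^{cu}_X)$ remains transverse to those fibers, and compactness then yields the uniform bounds on length and forces each arc to cross from $T_1$ to $X_N(T_1)$. Your explicit computation of $dG$ and of the fiber traces of $E^{cs}_f$ and $E^{cu}_f$ (reducing the transversality to that of $F^s_1$ and $F^u_1$) is precisely the detail the paper leaves implicit.
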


\begin{proof} This follows immediately  from the fact that the perturbation
preserves the $\TT^2 \times \{t\}$ coordinates (in the coordinates
given by $H_N$) and the original center direction was positively
transverse to those fibers. Since $\TT^2 \times [0,1]$ is compact,
one obtains that these arcs are of bounded length and join both
boundaries of $\cU_N$.
\end{proof}

We can now show:

\begin{theo}\label{p.noleafconj} There are center leaves which
are not fixed for no iterate of $f$. Consequently, there is no
finite lift or finite iterate of $f$ which is leaf conjugate to
the time-one map of an Anosov flow.
\end{theo}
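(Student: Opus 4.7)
The plan is to reduce the $f^n$-invariance of a center leaf to a periodicity condition for an explicit ``first-return'' map $\phi \colon T_1 \to T_1$, and then to exhibit non-periodic orbits of $\phi$ by direct computation in the coordinates of Lemma~\ref{l.coordinates}.

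First I define $\phi$ via Lemma~\ref{l.centerfoliation}: each $p \in T_1$ is the endpoint of a unique component of $W^c_f \cap \cU_N$, whose other endpoint $q(p)$ lies on $X_N(T_1)$, and I set $\phi(p) := X_{-N}(q(p))$. Since $\rho$ vanishes near $0$ and equals $1$ near $1$, $G$ is the identity on a neighborhood of $T_1 \cup X_N(T_1)$, so $f|_{T_1} = X_N|_{T_1}$ and hence $f^n(p) = X_{nN}(p)$ for every $p \in T_1$ and $n \in \ZZ$. Because $f$ coincides with $X_N$ on the positively invariant region $\bigcup_{t \ge 0} X_t(T_1)$, the foliation $W^c_f$ is $X_N$-invariant there, and one checks by induction that the leaf $L_p$ meets $X_{kN}(T_1)$ in the unique point $X_{kN}(\phi^k(p))$ for every $k \in \ZZ$ (injectivity and surjectivity of $\phi$, which cover the case $k<0$, follow from the uniqueness part of Lemma~\ref{l.centerfoliation}). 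Since distinct points of $T_1$ lie on distinct center leaves in $V_1$, the invariance $f^n(L_p) = L_{f^n(p)} = L_p$ is equivalent to $\phi^n(p) = p$.

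Second, and this is the main obstacle, I compute $\phi$ explicitly on a nonempty open strip. Working in the rescaled coordinates $H_N$, the proof of Theorem~\ref{p.partiallyhyp} gives $W^{cs}_f = W^{cs}_X$ and $W^{cu}_f = G_{\ast} W^{cu}_X$ on $\cU_N$, so the center leaf of $f$ through $(0, x_0, y_0)$ meets each $\{t\} \times \TT^2$ in the intersection of the $F^s_1$-leaf through $(x_0, y_0)$ with the $(0, \rho(t))$-translate of the $F^u_1$-leaf through $(x_0, y_0)$. Restricting to the strip $(1/4, 1/2) \times S^1 \subset T_1$ and using the graphs $y = \alpha(x) + c$ and $y = \alpha(x - 1/4) + c'$ from Lemma~\ref{l.coordinates}, this intersection is characterized by $\beta(x) = \beta(x_0) + \rho(t)$, where $\beta(x) := \alpha(x) - \alpha(x - 1/4)$. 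Strict convexity of $\alpha$ yields $\beta' > 0$, and the blow-up of $\alpha$ at the endpoints makes $\beta$ a homeomorphism from $(1/4, 1/2)$ onto $\RR$. Evaluating at $t = 1$, where $\rho(1) = 1$, the $x$-coordinate of $\phi$ on this strip is $\psi(x) = \beta^{-1}(\beta(x) + 1)$, conjugate via $\beta$ to the translation $\xi \mapsto \xi + 1$ of $\RR$; it therefore has no periodic points. Hence no point of $(1/4, 1/2) \times S^1$ is $\phi$-periodic, and the corresponding leaves $L_p$ are not $f^n$-invariant for any $n \neq 0$. The subtle point here is to match the intrinsic description of $E^c_f$ coming from the partial hyperbolicity proof with the concrete leaf-by-leaf picture that makes the computation of $\phi$ transparent.

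Finally, to rule out leaf conjugacy, suppose for contradiction that some finite lift $\tilde f$ of some iterate $f^n$ ($n \neq 0$), on a finite cover $\pi \colon \widetilde{M} \to M$, is leaf conjugate to the time-one map $Y_1$ of an Anosov flow $Y$. The center leaves of $Y_1$ are the orbits of $Y$ and are therefore $Y_1$-invariant; leaf conjugacy transfers this property, so $\tilde f$ fixes every leaf of its lifted center foliation. For each leaf $L$ of $W^c_f$ one chooses a component $\tilde L$ of $\pi^{-1}(L)$, and $\tilde f(\tilde L) = \tilde L$ descends to $f^n(L) = \pi(\tilde f(\tilde L)) = \pi(\tilde L) = L$. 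Thus $f^n$ would fix every leaf of $W^c_f$, contradicting the conclusion of the previous paragraph.
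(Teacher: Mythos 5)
Your proof is correct and follows essentially the same route as the paper: both identify the center leaves in the fundamental domain $\cU_N$ with the components of $W^{cs}_X \cap G(W^{cu}_X)$, observe that $f$ acts as $X_N$ on the boundary tori, and conclude that an $f^n$-invariant leaf would force its endpoints on $T_1$ and $X_N(T_1)$ to match, which the Dehn twist prevents. Your first-return map $\phi$ and the computation with $\beta(x)=\alpha(x)-\alpha(x-\tfrac14)$, conjugating $\phi$ to a translation on a strip, simply substantiate in a more quantitative way the step the paper asserts qualitatively as ``those arcs cannot be joined inside $W^{cs}_X\cap G(W^{cu}_X)$.''
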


\begin{proof} Using  unique integrability, one knows that the
$f$-invariant foliation $W^c_{f}$ tangent to $E^c_f$ is obtained
by intersecting the preimages of $W^{cs}_X$ with the forward
images of $W^{cu}_X$, where $W^{cs}_X$ and $W^{cu}_X$ are defined
in neighborhoods of $A_X$ and $R_X$ respectively.

In particular, we get that restricted to $\cU_N$ which is a
fundamental domain for $f$ in the complement of $A_X \cup R_X$, we
have that $W^c_{f}$ consists of $W^{cs}_X \cap G(W^{cu}_X)$ which
is an arc joining $T_1$ and $X_N(T_1)$ as proved in Lemma
\ref{l.centerfoliation}.

Notice that $f$ coincides with $X_N$ in a neighborhood of $T_1$
(and $G=id$ in a neighborhood of $T_1$) so that $W^c_{f}$ consists
of horizontal lines (i.e. of the form $[0,\varepsilon) \times
\{p\} $ or $(1-\varepsilon,1] \times \{p\}$ in the coordinates
given by $H_N$) in neighborhoods of $T_1$ and $f(T_1)$. Moreover,
one has that the image of the center line which is of the form
$[0,\varepsilon)\times \{p_0\}$ in a neighborhood of $T_1$ is sent
by $f$ to the arc of $W^c_f$ which is of the form
$(1-\varepsilon,1]\times \{p_0\}$ (again because $G=id$ in a
neighborhood of $T_1$ and $X_N(T_1)$).

On the other hand, those arcs cannot be joined inside $W^{cs}_X
\cap G(W^{cu}_X)$ if they are not the leaves corresponding to
circles in $T_1$, and  so, it follows that the center leaves cannot
be fixed by $f$. The same argument implies that this is not
possible for $f^k$ for any $k\geq 0$ since $f$ coincides with
$X_N$ once it leaves $\cU_N$.
\end{proof}

\begin{rema} This in stark contrast with the results of
\cite{BW} where it is shown that when $f$ is transitive, if
certain center leaves are fixed, then all center leaves must be.
It is also important to note that in this example the leaves of
both the center-stable and center-unstable foliations are fixed by
$f$ but the connected components of their intersections---the
center leaves---are not.
\end{rema}

Since the $3$-manifold $M$ admits an Anosov flow, $\pi_1(M)$ has
exponential growth, and so, the manifold $M$ does not support a
partially hyperbolic diffeomorphisms isotopic to an Anosov
diffeomorphism or a skew product.  This is because Anosov
diffeomorphisms only exist on $\TT^3$ in dimension 3 (see for
example \cite{HP} and references therein) and skew products are
only defined (in \cite{BW}) on circle bundles over $\TT^2$---both
these types of 3-manifolds are associated with polynomial growth
in their fundamental groups. These growth properties are immune to
taking finite covers, and so, the same holds true for any finite
cover of $M.$ Therefore, if a finite lift or iterate of $f$ were
leaf conjugate to one of the three models, it would have to be the
time-one map of an Anosov flow.  In this case, there would exist
an iterate of $f$ on a finite cover that would fix every center
leaf. We have shown that this is not the case, and thus, Theorem
\ref{p.noleafconj} implies Corollary \ref{c.conjecture}.

\section{Non-trivial isotopy class}\label{s.isotopy}
In this section we will complete the proof of  Theorem
\ref{t.main} by showing that no iterate of $f$ is isotopic to the
identity. It is at this point only that we shall use the fact that
the original $DA$-diffeomorphism has at least two sources since this provides a curve intersecting the torus
where the modification is made which is homologically non trivial. So we restrict ourselves
to the case where there are exactly two sources, but it should be
clear that all we have done until now works with any number of
such sources.

\begin{theo}\label{p.notisotopic} For every $k \neq 0$, $f^k$ is not
isotopic to the identity. More precisely, the action of $f^k$ on
homology is non-trivial.
\end{theo}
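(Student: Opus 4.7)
The plan is to show that, for every $k \neq 0$, the induced map $f_*^k$ on $H_1(M;\QQ)$ is not the identity, which immediately implies $f^k$ is not isotopic to the identity. Since $f = G \circ X_N$ and $X_N$ is isotopic to the identity through the flow $(X_t)_{0\leq t\leq N}$, one has $f_*^k = G_*^k$ on homology, so the task reduces to studying the Dehn twist $G$, which is supported in $\cU_N$ and, in the coordinates given by $H_N$, twists in the $y$-direction.

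I would then apply the standard formula for a Dehn twist on first homology. Let $\beta \in H_1(T_1) \subset H_1(M)$ denote the class of a ``$y$-loop'' $\{(t_0,x_0)\}\times S^1_y$ in $T_1$. For every $\alpha \in H_1(M;\ZZ)$,
\[
G_*(\alpha) \;=\; \alpha + (\alpha\cdot [T_1])\,\beta,
\]
where $\alpha \cdot [T_1]$ is the algebraic intersection, and iterating yields $G_*^k(\alpha) = \alpha + k(\alpha\cdot[T_1])\,\beta$. The formula is immediate from the explicit form of $G$: represent $\alpha$ by a loop transverse to $\cU_N$; each arc traversing $\cU_N$ has its endpoints fixed by $G$ but is replaced by an arc winding one extra time around $S^1_y$, contributing one signed copy of $\beta$ per crossing of $T_1$.

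It then suffices to exhibit $\alpha_0$ with $\alpha_0 \cdot [T_1] \neq 0$ and to verify that $\beta$ is non-torsion in $H_1(M;\QQ)$. The first uses the two-sources hypothesis: $\partial M^+ = T_1 \cup T_2$ gives $[T_1] + [T_2] = 0$ in $H_2(M)$, but $T_1$ alone does not separate $M$; concretely, any loop $\alpha_0$ entering $M^-$ through $T_1$ and returning to $M^+$ through $T_2$ satisfies $\alpha_0 \cdot [T_1] = \pm 1$. The second requires identifying $\beta$ geometrically: it is a compact leaf of $F^s_1 = W^{cs}_X \cap T_1$, lying on the weak stable manifold of some hyperbolic periodic orbit $\delta \subset A_X$; this weak stable manifold is an embedded cylinder on which $\beta$ is freely homotopic to $\delta$, so $[\beta] = [\delta]$ in $H_1(M;\QQ)$. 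One then argues directly, using incompressibility of $T_1$ (cf.\ \cite{Br}) together with the explicit Franks--Williams gluing, that $[\delta]$ is non-torsion.

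Granted these two inputs, $G_*^k(\alpha_0) - \alpha_0 = k\beta \neq 0$ for every $k \neq 0$, so $f^k$ acts nontrivially on $H_1(M;\QQ)$ and cannot be isotopic to the identity. The main obstacle is the verification that $[\beta] \neq 0$ in rational homology: the rest is formal Dehn twist calculus (for the formula and its iteration) or an immediate consequence of there being two boundary tori (for the existence of $\alpha_0$). Pinning down the class of the $y$-direction by tracing the $\theta_1$-coordinates through the gluing $\psi$ is where the construction specifics enter.
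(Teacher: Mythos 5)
Your architecture is exactly the paper's: reduce $(f^k)_*$ to $G_*^k$ via the flow isotopy, apply the Dehn-twist formula $G_*^k(\alpha)=\alpha+k(\alpha\cdot[T_1])\beta$ with $\beta$ the class of the $y$-circles, produce a loop meeting $T_1$ algebraically once (this is precisely where the two-sources hypothesis enters, since $T_1$ alone does not separate $M$; your $\alpha_0$ is the paper's $\gamma_2$, which crosses $T_1$ and returns through $T_2$), and finally show that $\beta$ is non-torsion. The first three steps are correct and coincide with the paper's computation $G_*^k([\gamma_2])=[\gamma_2]+k[\gamma_1]$.

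The genuine gap is the step you yourself call the main obstacle: you never establish $[\beta]\neq 0$ in $H_1(M;\QQ)$, and the route you gesture at would not close it. Incompressibility of $T_1$ gives $\pi_1$-injectivity, hence at best that $\beta$ is nontrivial in $\pi_1(M)$; it gives no information about its image in rational homology, which is what $k\beta\neq 0$ for all $k\neq 0$ requires. Moreover, the detour through a periodic orbit $\delta\subset A_X$ (via the weak stable cylinder through a compact leaf of $F^s_1$) adds work --- you must show the compact leaf is essential in that cylinder --- and still leaves you needing to prove $[\delta]$ non-torsion. The paper's argument for this step is shorter and direct: the $y$-circles of $T_1=\partial\Gamma_1$ are longitudes of the removed solid torus, hence homologous to the source orbit $\gamma_1\subset M_0$; the suspension fibration $M_0\to S^1$ provides a closed $1$-form (the pullback of $d\theta$) integrating to $1$ on $\gamma_1$, and the restrictions of this form to $M^+$ and $M^-$ glue to a closed $1$-form on $M$ that pairs to $1$ with $\beta$. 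This certifies $k\beta\neq 0$ for every $k\neq 0$ and completes the proof; substituting this cohomological certificate for your incompressibility appeal repairs the argument.
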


\begin{proof}
It suffices to show that the action on homology is not trivial.

To establish this, first note that for the suspension manifold
$M_0$ the periodic orbits where we did the $DA$-construction are
homologically non-trivial. This implies that after removing the
solid torus, the circles in the same direction in the boundary are
still homologically non-trivial. The gluing we have performed
preserves this homology class, so that it remains homologically
non-trivial after the gluing too \footnote{A way to see this is by
constructing a closed 1-form in each piece which integrates one in
the desired circle. These 1-forms glue well to give a closed
1-form integrating one in the same curve.}.

Notice moreover that there is a representative $\gamma_1$ of this
homology class which does not intersect $T_1$ simply by making a
small homotopy of this loop which makes it disjoint from $T_1$.

Consider a closed curve $\gamma_2$ intersecting $T_1$ only once
(it enters by $T_1$ and then comes back by $T_2$) we know that
$\gamma_2$ is not homologous to $\gamma_1$. This can be shown by
considering a small tubular neighborhood $U$ of $T_1$ and a closed
$1$-form which is strictly positive\footnote{If $U\sim T_1 \times
[-1,1]$ and we call $\theta$ to the variable on $[-1,1]$ it
suffices to choose $f(\theta)d\theta$ with $f$ smooth, positive on
$(-1,1)$ and vanishing at $\pm 1$.} in $U$ and vanishes outside
$U$. It is clear that such a $1$-form has a non-vanishing integral
along $\gamma_2$ and a vanishing one along $\gamma_1$ proving the
desired claim.

From how $G$ was chosen it is clear that $G^k_\ast ([\gamma_2]) =
[\gamma_2 ] + k[\gamma_1]$. Since $X_N$ is isotopic to the
identity, we also have that $(G \circ X_N)_\ast = G_\ast$.

This implies that for every $k\neq 0$, the map $(f^k)_\ast= (G
\circ X_N)_\ast^k = G_\ast^k$ is different from the identity.

\end{proof}

\section*{Acknowledgements}
We would like to thank the hospitality of IMERL-CMAT (Montevideo,
Uruguay) and the Institut de Math\'ematiques de Burgogne (Dijon,
France) for hosting us in several stages of this project.

\vspace{1.5cm}

\begin{itemize}
\item[]  \emph{Christian Bonatti}
\begin{itemize}
\item[] Institut de Math. de Bourgogne CNRS - UMR 5584 \item[]
Universit\'e de Bourgogne. \item[] Dijon 21004, France
\end{itemize}
\item[] \emph{Kamlesh Parwani}
\begin{itemize}
\item[] Department of Mathematics \item[]Eastern Illinois
University \item[]Charleston, IL 61920, USA.
\end{itemize}
\item[] \emph{Rafael Potrie}
\begin{itemize}
\item[]Centro de Matem\'atica, Facultad de Ciencias \item[]
Unviversidad de la Republica \item[]11400, Montevideo, Uruguay
\end{itemize}
\end{itemize}

\end{document}